\newtheorem{teo}{Theorem}
\newtheorem{lem}[teo]{Lemma}
\newtheorem{prop}[teo]{Proposition}
\theoremstyle{definition}
\newtheorem{defn}[teo]{Definition}
\newtheorem{exem}{Example}
\newtheorem{obs}[teo]{Remark}
\newtheorem{theoremx}{Theorem}
\newcommand{\C}{\mathbb C}
\newcommand{\R}{\mathbb R}
\newcommand{\Z}{\mathbb Z}
\newcommand{\N}{\mathbb N}
\newcommand{\vet}[1]{\ensuremath{\overrightarrow{#1}}}
\newcommand{\dsp}{\displaystyle}
\newcommand{\lip}{\langle}
\newcommand{\rip}{\rangle}
\newcommand{\Ssig}{S_\sigma}
\newcommand{\sigduo}{\check{\sigma}}
\newcommand{\ord}{\mbox{ord}\;}
\title{Whitney equisingularity for families of hypersurfaces in toric varieties}
\author{Tha\'is Maria Dalbelo, Danilo da N\'obrega Santos}
\date{\it{Dedicated to the memory of Maria Aparecida Soares Ruas}}
\begin{document}

\maketitle
\begin{abstract}
In this paper, we establish conditions for a family $\{f_t\}$ of functions, with not necessarily isolated singularities, defined on a toric variety so that the associated family of hypersurfaces $\{f_t^{-1}(0)\}$ is Whitney equisingular. We work in the setting of toric varieties with arbitrary singular sets. This extends previous results by Eyral and Oka concerning families $\{F_t\}$ of functions in $\mathbb{C}^n$, with not necessarily isolated singularities, ensuring that the corresponding hypersurface family $\{F_t^{-1}(0)\}$ is Whitney equisingular.
\end{abstract}
\justifying
\section{Introduction}

The concept of Whitney equisingularity plays a crucial role in Algebraic Geometry and Singularity Theory. It provides a rigorous framework to study families of varieties with singular points by ensuring that certain geometric and topological properties remain consistent across the family. The search for invariants and conditions to describe Whitney equisingularity in families of varieties is
one of the main questions in Singularity Theory and has been studied by many authors, see for instance \cite{DamonGaffney, Gaffney, Gaffney2, BBT4, Tei1}. 



One concept strongly associated with Whitney equisingularity of families of varieties is the concept of Newton polyhedron. On unpublished notes Brian\c con studied the Whitney equisingularity for a family of Newton non-degenerate isolated hypersurface singularities. More precisely, let $F(t,z) = F(t,z_1,\dots,z_r)$ be a family of non-constant polynomial functions on $\mathbb{C} \times \mathbb{C}^r$, satisfying $F(t,0) = 0$ for all sufficiently small values of $t \neq 0$. We denote $F_t(z) = F(t,z)$, and associate to the family $\{F_t\}$ of functions the corresponding family of hypersurfaces $\{V(F_t)\} = \{F_t^{-1}(0)\}$ in $\mathbb{C}^r$. With this notation, J. Briançon \cite{briancon} proved the following result.

\begin{theoremx}
    Suppose that for all $t$ sufficiently small, the following conditions are satisfied:
    \begin{enumerate}
        \item $F_t$ has an isolated singularity at $0\in\C^r$;
        \item the Newton boundary $\Gamma(F_t;z)$ of $F_t$ at $0$ is independent of $t$;
        \item $F_t$ is non-degenerate (in the sense of \cite{kouchnirenko1976,oka1979bifurcation}).
    \end{enumerate}
    Then the family of hypersurfaces $\{V(F_t)\}$ is Whitney equisingular.
\end{theoremx}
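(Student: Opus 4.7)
The plan is to invoke Teissier's numerical criterion for Whitney equisingularity and reduce the theorem to combinatorics of the Newton polyhedron through Kouchnirenko's formula. Recall that for a family $\{F_t\}$ of isolated hypersurface singularities, $\{V(F_t)\}$ is Whitney equisingular along the $t$-axis if and only if the full $\mu^{\ast}$-sequence
\[
\mu^{\ast}(F_t) = \bigl(\mu^{(r)}(F_t), \mu^{(r-1)}(F_t), \ldots, \mu^{(1)}(F_t)\bigr)
\]
is constant in $t$, where $\mu^{(i)}(F_t)$ is the Milnor number at $0$ of the restriction of $F_t$ to a generic $i$-dimensional linear subspace of $\C^r$. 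The problem therefore reduces to showing that each $\mu^{(i)}(F_t)$ is independent of $t$.

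For $i=r$ this is immediate: by Kouchnirenko's formula, a Newton non-degenerate isolated singularity has Milnor number equal to a purely combinatorial invariant $\nu(\Gamma_+(F_t))$ of its Newton polyhedron. Assumptions (2) and (3) guarantee that $\Gamma_+(F_t)$ is a fixed polyhedron $\Gamma_+$ and that every $F_t$ is non-degenerate, so $\mu^{(r)}(F_t) = \nu(\Gamma_+)$ does not depend on $t$. For $i<r$, I would fix a sufficiently generic flag of linear subspaces $H_i\subset\C^r$ of dimension $i$ and apply Kouchnirenko's formula to $F_t|_{H_i}$. Provided $H_i$ lies outside a proper Zariski-closed subset of the Grassmannian $G(i,r)$ that depends only on $\Gamma_+$, the Newton polyhedron of $F_t|_{H_i}$ is determined combinatorially by $\Gamma_+$ and $H_i$, and non-degeneracy of $F_t$ transfers to $F_t|_{H_i}$. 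Hence $\mu^{(i)}(F_t) = \nu\bigl(\Gamma_+(F_t|_{H_i})\bigr)$ is again independent of $t$, and Teissier's criterion then implies Whitney equisingularity of $\{V(F_t)\}$.

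The principal difficulty lies in establishing the \emph{uniform} genericity of the flag $\{H_i\}$ in the parameter $t$: one must exhibit a single choice of generic subspaces that simultaneously works for all sufficiently small $t$. This is precisely where hypothesis (2) is essential, because it forces the locus of bad subspaces in each Grassmannian to depend only on the common polyhedron $\Gamma_+$ rather than on the coefficients of $F_t$. Combining the Zariski-openness of non-degeneracy for restricted families with the upper semicontinuity of $\mu^{(i)}$ in $t$, one verifies that such a flag exists, which closes this gap. An alternative, more geometric route would be to construct a simultaneous toric modification of $\C^r$ adapted to the dual fan of $\Gamma_+$; non-degeneracy then yields a simultaneous embedded resolution of the family $\{V(F_t)\}$, from which Whitney equisingularity can also be extracted — an approach likely to generalize more naturally to the toric-variety setting pursued later in the paper.
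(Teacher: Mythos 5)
Your route is genuinely different from the one the paper relies on. Briançon's argument (and the proof of Theorem \ref{mainteo} in this paper, which generalizes it to the toric, non-isolated setting) verifies the Whitney condition (b) directly: one assumes a violating pair of sequences, applies the Curve Selection Lemma, expands the resulting curves in Taylor series, and uses the non-degeneracy of the face functions $(F_t)_{\Delta_w}$ together with the Euler identity for weighted-homogeneous polynomials to reach a contradiction. You instead propose to invoke the Teissier--Briançon--Speder equivalence between Whitney equisingularity and constancy of the $\mu^{*}$-sequence, and to compute each $\mu^{(i)}$ combinatorially via Kouchnirenko's formula. The criterion itself is a legitimate theorem, so the reduction to ``$\mu^{(i)}(F_t)$ is independent of $t$ for all $i$'' is a sound first step.

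The gap is in your treatment of $i<r$. The assertion that the Newton polyhedron of $F_t|_{H_i}$ is ``determined combinatorially by $\Gamma_+$ and $H_i$'' and that non-degeneracy transfers to the restriction is exactly the hard point, and as stated it is not justified. Writing $F_t|_{H_i}$ in linear coordinates on $H_i$, its coefficients are polynomials in the coefficients of $F_t$ and in the entries of a parametrization of $H_i$; which monomials survive (hence the Newton polyhedron of the section) and whether the section is non-degenerate depend on the actual coefficients of $F_t$, not only on its support. Hypothesis (2) fixes the Newton boundary but not the coefficients, so the ``bad locus'' in the Grassmannian a priori moves with $t$, and upper semicontinuity of $\mu^{(i)}$ does not by itself upgrade to constancy. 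Moreover, even for a single non-degenerate $F$ it is not automatic that a generic linear section is non-degenerate in the induced coordinates, nor that it is convenient, which the basic form of Kouchnirenko's formula requires. In effect, the statement ``$\mu^{*}$ is determined by $\Gamma_+$ for non-degenerate isolated singularities'' is a consequence of Briançon's theorem (via the Teissier direction of the equivalence and the connectedness of the space of non-degenerate coefficient choices) rather than an elementary input to it. To close your argument you would need to prove the generic-section and uniform-genericity claims in detail, which is of difficulty comparable to the theorem itself; the direct curve-selection verification of condition (b) avoids this circle entirely, which is why it is the approach taken here and in its generalizations.
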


In \cite{eyraloka1}, Eyral and Oka gave a generalization of Brian\c{c}on's result to families of hypersurfaces with non-isolated singularities. To do this, they introduced the concept of \emph{admissible family}. Roughly speaking, a family $\{F_t\}$ is admissible if
the Newton boundary is constant (with respect to $t$) and if the family $\{F_t\}$ is a family of non-degenerate hypersurfaces satisfying the ``uniform local tameness'' condition (see \cite[Definition 3.7]{eyraloka1}). In  \cite{eyraloka2} they generalize this result to families of complete intersection varieties defined in $\mathbb{C}^r$.

These results are formulated in the ambient space $\mathbb{C}^r$, whose toric structure plays a subtle but fundamental role in the analysis. Indeed, $\mathbb{C}^r$ is a very special toric variety. Often, the combinatorics of the semigroup that defines $\mathbb{C}^r$ is so naturally employed in the study of various objects within $\mathbb{C}^r$ that we may not even realize the toric structure is being utilized. This semigroup is generated by the canonical basis of $\mathbb{R}^r$.
Since the class of toric varieties includes elements with arbitrary singular sets, developing studies similar to those conducted in $\mathbb{C}^r$ requires understanding the obstacles to extending such results to the singular case.


The main goal of this work is to generalize the results of Eyral and Oka to the context of functions defined on a toric variety, by using the notion of non-degenerate functions on arbitrary toric varieties, as introduced by Matsui and Takeuchi in \cite{matsui2011}, together with the combinatorics that resides in these varieties and arises from the semigroups that generate them. 


As in \cite{eyraloka1}, we will address non-isolated singularities. In this context, it is necessary to account not only for the compact faces of the Newton polyhedron but also for an additional class of faces.

Following the ideas presented in \cite{oka1990}, we define the notions of essential non-compact face and non-compact Newton boundary in the context of toric varieties. We also develop a tool to address the essential non-compact faces. Applying a similar approach to that in \cite{oka2015mixed}, we introduce the concept of local tameness and, consequently, adapt the admissibility condition (originally introduced in \cite{eyraloka1}) to the toric setting. With these notions established, we are able to prove the following theorem.

\begin{theoremx}[see Theorem \ref{mainteo}]
   Given an admissible family of polynomial functions $\{f_t\}$ on a toric variety $X$, then the associated family of hypersurfaces $\{V(f_t)\} \subset X$ is Whitney equisingular.
\end{theoremx}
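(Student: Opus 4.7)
The plan is to construct a Whitney stratification of the total space $X \times T$ of the family (with $T$ a small neighborhood of $0$ in the parameter line) whose strata are products with the parameter axis, and then verify Whitney conditions (a) and (b) face by face, orbit by orbit, using the admissibility hypotheses. Since $X$ is a toric variety, it is naturally partitioned into torus orbits $O_\sigma$ indexed by the cones $\sigma$ of the defining fan. I would begin by refining this orbit decomposition: for each orbit $O_\sigma$ and each face $\Delta$ (compact or essential non-compact in the sense defined via \cite{oka1990}) of the local Newton polyhedron of $f_t$ read in the chart adapted to $\sigma$, consider the locally closed subset where the corresponding face function $f_{t,\Delta}$ vanishes. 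Non-degeneracy of $\{f_t\}$ in the sense of \cite{matsui2011} guarantees that these subsets are smooth along the orbit, and the constancy in $t$ of the Newton boundary (part of admissibility) ensures that this combinatorial stratification of $X$ is $t$-independent, so each stratum in $X \times T$ has the form $S \times T$.

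Next, I would check Whitney conditions (a) and (b) for every incident pair of strata. Condition (a) amounts to showing that for a sequence of smooth points of $V(f_t)$ converging to a point on a deeper stratum, the tangent hyperplanes converge to a hyperplane containing the tangent space of the deep stratum. Working chart by chart, the limiting directions of $\nabla f_t$ can be described in terms of gradients of the face functions $f_{t,\Delta}$; the uniform local tameness condition built into admissibility (adapted from \cite{eyraloka1} via \cite{oka2015mixed}) is precisely what controls these gradients uniformly in $t$ and rules out the pathological degenerations that would break (a). Condition (b) is obtained by the same face-by-face analysis, additionally tracking the secant direction between the two sequences and exploiting the product structure $S \times T$ to align it with the parameter stratum. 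The essential non-compact faces require particular care, because the classical compactness arguments available for Briançon's original theorem fail on the non-compact part of the Newton boundary; here the tool developed in the paper for essential non-compact faces of toric hypersurfaces is invoked to salvage the needed estimates.

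Finally, to get equisingularity of the family, the Whitney conditions must be verified with the parameter axis $\{0\}\times T$ as one of the strata. This reduces, via the $t$-invariance of the stratification and the polynomial dependence of each face function $f_{t,\Delta}$ on $t$, to checking that the relative tangent planes of $V(f_t)$ have well-behaved limits at the parameter axis. One can either verify this directly face by face, or invoke the Kuo--Verdier condition (w) and its known implication of Whitney (b) in the algebraic setting, combined with the Gaffney--Teissier type criteria referenced in \cite{Gaffney, Tei1, BBT4}.

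The hard part will be the coherence of the face-by-face analysis across different cones of the fan: a point of $X$ may lie in the closure of several orbits $O_\sigma$ simultaneously, and the Newton data in the corresponding charts must be glued consistently so that the strata and the Whitney estimates agree on overlaps. Uniform local tameness is designed exactly for this patching, but its combinatorial formulation on a general toric variety (where $X$ itself may be singular along lower-dimensional orbits) is considerably more delicate than in the smooth ambient $\mathbb{C}^r$ of \cite{eyraloka1}. The genuinely new work, compared to Eyral--Oka, lies in carrying the limit-of-gradients arguments through the singularities of $X$ using only the semigroup combinatorics of each affine chart $X_\sigma$, and this is where I expect the main technical effort to be concentrated.
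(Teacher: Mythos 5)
There are genuine gaps, the most serious being the stratification itself. The paper's canonical stratification does \emph{not} refine the torus orbits by the vanishing loci of the face functions $f_{t,\Delta}$: the face functions are auxiliary objects used only in the limit estimates, and their zero sets are not strata of $V(f_t)$ (they need not even be unions of smooth pieces compatible with $V(f_t)$). The correct stratification takes, for each orbit $\C\times X(\Ssig)^{*I}$, either the whole orbit as a stratum (when $f$ vanishes identically on $X(\Ssig)^{I}$) or the two pieces $V(f)\cap(\C\times X(\Ssig)^{*I})$ and its complement in the orbit (when it does not). With your finer decomposition the reduction of the Whitney verification to the single essential pair $(C_I, A_J)$ — vanishing orbit below, hypersurface-in-orbit above — is lost, and that reduction is what makes the proof tractable. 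A second misreading: the paper works with one affine toric variety $X(\Ssig)\subset\C^r$ given by a single cone, with a single global monomial parametrization $h:(\C^*)^n\to X(\Ssig)$; there is no fan with several maximal cones and no gluing of charts, so the ``hard part'' you anticipate (coherence of Newton data across overlapping charts) does not arise.

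Consequently the actual technical core is absent from your outline. For the pair $(C_I,A_J)$ the paper takes real-analytic curves $\gamma(s)\in A_J$, $\tilde\gamma(s)\in C_I$ with common limit, expands the secant $l(s)$ and the gradient $\nabla F(\gamma(s))$ in powers of $s$, and compares orders using the weight vector $w$ orthogonal to $\langle b_1,\dots,b_m\rangle$ whose face $\Delta_w$ is an \emph{essential non-compact face}. Local tameness supplies a nonvanishing partial $\partial (F_\tau)_{\Delta_w}/\partial z_{i_0}(A)\neq 0$ bounding the order of the gradient, and the Euler identity for the weighted-homogeneous face function kills the leading term of the Hermitian pairing $\langle l(s),\nabla F(\gamma(s))\rangle$ in the only non-obvious case. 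One then intersects with the tangent space of the ambient orbit pair (which is Whitney regular because orbit decompositions of toric varieties are) to land in $T_{\gamma(s)}A_J$, and treats the pair with the $t$-axis ($I=\emptyset$) separately using non-degeneracy alone. Your appeal to the Kuo--Verdier condition and to Gaffney--Teissier criteria is not a substitute for this computation: no integral-closure or $(w)$-condition machinery is available here for non-isolated singularities on a singular ambient space, and the paper does not use it. To repair the proposal you would need to (i) adopt the orbit-based canonical stratification, (ii) carry out the curve-selection and order-of-vanishing argument explicitly, and (iii) isolate where local tameness versus plain non-degeneracy is used.
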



This paper is structured as follows. In Section $2$, we define toric varieties and present some important notions related to them. We also adapt to the toric case fundamental tools that were used by Eyral and Oka in \cite{eyraloka1}. In Section $3$, we introduce the concept of essential non-compact faces and define the condition of local tameness for the toric case. We establish the notion of admissible family, which combines non-degeneracy, tameness, and stability of the Newton boundary. Furthermore, we prove that under the condition of Newton boundary invariance, a family of non-degenerate functions possesses good geometric properties, such as smoothness and the existence of a Milnor ball. Finally, in Section 4, we present and prove the main theorem, which guarantees that admissible families of functions on toric varieties are Whitney equisingular. We conclude this work by presenting an example in which the family of hypersurfaces is defined on a toric surface that is also determinantal, thereby highlighting the connection between toric geometry and determinantal structures.


\section{Generalities: toric varieties and non-degenerate conditions}

For the convenience of the reader and to clarify some notations, in this section we adapt key concepts and results presented by Eyral and Oka \cite{eyraloka1}. Specifically, we reformulate these results in the setting of polynomial functions defined on toric varieties, using the geometric and combinatorial structures that arise in this context. These adaptations will be essential for the development of this work.
For further details on toric varieties and non-degeneracy conditions, we refer to \cite{brasselet,matsui2011,okabook}.

\smallskip

\subsection{Toric varieties}Let $\sigma \subset \mathbb{R}^n$ be a strongly convex rational polyhedral cone with maximal dimension $n$ and let  
\[
\check{\sigma} = \left\{v \in \mathbb{R}^n; \ 
\ \left\langle u,v\right\rangle \geq 0 \ \ \text{for any} \ \ u \in 
\sigma\right\},
\] be
the dual cone of $\sigma$, where $\langle \cdot , \cdot \rangle$ is the usual inner product in $\R^n$.
Then the dimension of $\check{\sigma}$ is $n$ and we obtain a semigroup  $S_\sigma = \check{\sigma}\cap \Z^n$ which is finitely generated. We denote by  $\{b_1,\dots,b_r\}$ an ordered generator system of 
$S_{\sigma}$.

\begin{defn}
	The $n$-dimensional affine toric variety (or simply toric variety) $X(S_\sigma) \subset \mathbb{C}^r$ is defined by the 
	spectrum of $\mathbb{C}[S_{\sigma}]$, i.e 	$X(S_\sigma)=\rm{Spec}(\mathbb{C}[S_{\sigma}])$, where $\rm{Spec}(\mathbb{C}[S_{\sigma}])$ denotes the set of maximal ideals in $\mathbb{C}[S_{\sigma}]$.
\end{defn}
 

There exists an action from the algebraic torus $(\C^*)^n$ to $X(\Ssig)$ and from our assumptions on $\sigma$ this action has a unique zero-dimensional orbit, which is the origin $0\in\C^r$. Furthermore, we may assume $\check{\sigma} \subset \mathbb{R}^n_+$, where $\mathbb{R}^n_+$ is the first orthant of $\mathbb{R}^n$.

Note that the semigroup $\Z^n$ and the ring $\C[\Z^n]$ are generated by $\{\pm e_i\}$ and $\{x_i,x_i^{-1}\}$, respectively, where $i=1,\dots,n$ and $\{e_i\}$ denote the canonical basis of $\mathbb{R}^n$. Recall the identifications:
\[
(\C^*)^n=\mbox{Spec}(\C[\Z^n])\cong\mbox{Hom}_{\mbox{sg}}(\Z^n,\C),
\]
where $\mbox{Hom}_{\mbox{sg}}(\Z^n,\C)$ denotes the set of semigroup homomorphisms from $\Z^n$ to $\C$. And, let us also remember the identifications:
\[X(\Ssig)=\mbox{Spec}(\C[\Ssig])\cong\mbox{Hom}_{\mbox{sg}}(\Ssig,\C),\]
in which each $\psi\in\mbox{Hom}_{\mbox{sg}}(\Ssig,\C)$ is associated with a point $x\in X(\Ssig)$ given by $x=(\psi(b_1),\dots,\psi(b_r))$.

\begin{defn}
    Let $\sigma \subset \mathbb{R}^n$ be a strongly convex rational polyhedral cone, and let $\Ssig$ and $X(\Ssig)$ be as above.  For each face $\gamma$ of $\sigma$ we associate a distinguished point $x_\gamma=(\psi_\gamma(b_1),\dots,\psi_\gamma(b_r))\in X(\Ssig)$, where $\psi_\gamma\in\mbox{Hom}_{\mbox{sg}}(\Ssig,\C)$ is given by
    \[
	\psi_\gamma(b_i)=
	\begin{cases}
		1, & \mbox{ if }\;\;\;b_i\in\gamma^\perp\\
		0, & \mbox{ otherwise}
	\end{cases},
    \]
    and $b_i \in \gamma^\perp$ means that $\lip b_i, u\rip = 0$ for all $u \in \gamma$.
\end{defn}

Distinguished points belong to $X(S_{\sigma})$, then it makes sense to consider their orbit under the action.

\begin{defn}\label{orbt}
	Let $\sigma\subset\R^n$ be a strongly convex rational polyhedral cone and $\gamma$ a face of $\sigma$. The orbit of the face  $\gamma$, under the action, is the orbit of its corresponding distinguished point $x_{\gamma}$, and is denoted by $\mathcal{O}_{\gamma}$.
\end{defn}

The following result (see \cite[Theorem  $3.2.6$]{cox2024}) describes the closure of an orbit.

\begin{teo} \label{decorb}
	Let $\sigma\subset\R^n$ be a strongly convex rational polyhedral cone and $X(S_{\sigma})$ the toric variety generated by $S_{\sigma}$. Then, we have
	\begin{itemize}
		\item [i.] $X(S_{\sigma})=\dsp\bigsqcup_{\gamma\prec\sigma}\mathcal{O}_\gamma$, where $\gamma \prec \sigma$ denotes a face $\gamma$ of $\sigma$ (including $\sigma$ itself);
        
		\item [ii.] If $\gamma\prec\sigma$ and $\overline{\mathcal{O}}_\gamma$ denotes the closure of the orbit $\mathcal{O}_\gamma$, then $\overline{\mathcal{O}}_\gamma=\dsp\bigsqcup_{\gamma\prec\sigma}\mathcal{O}_\sigma$.
	\end{itemize}
\end{teo}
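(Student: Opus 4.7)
The plan is to work throughout with the identification $X(S_\sigma)\cong\mbox{Hom}_{\mbox{sg}}(S_\sigma,\C)$ recalled in the preamble, so each point of $X(S_\sigma)$ is encoded as a semigroup homomorphism $\psi\colon S_\sigma\to\C$. The central structural fact I would establish first is that for any $\psi$, the set $F_\psi=\{b\in S_\sigma:\psi(b)\neq 0\}$ is a saturated face of $S_\sigma$: it is multiplicatively closed, and if $a+b\in F_\psi$ then $\psi(a)\psi(b)=\psi(a+b)\neq 0$ forces both $a,b\in F_\psi$. By the standard order-reversing bijection between the faces of $\sigma$ and the faces of $\check\sigma$, there is a unique $\gamma\prec\sigma$ with $F_\psi=\gamma^\perp\cap S_\sigma$.

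Using this dictionary, part (i) proceeds as follows. Set $U_\gamma=\{\psi:F_\psi=\gamma^\perp\cap S_\sigma\}$. Since every $\psi$ has exactly one associated face, $X(S_\sigma)=\bigsqcup_{\gamma\prec\sigma}U_\gamma$ tautologically, and it remains to show $U_\gamma=\mathcal{O}_\gamma$. The torus action in this model reads $(t\cdot\psi)(b)=t(b)\,\psi(b)$ for $t\in(\C^*)^n\cong\mbox{Hom}_{\mbox{sg}}(\Z^n,\C^*)$, which preserves the support, so $\mathcal{O}_\gamma\subset U_\gamma$. For the reverse inclusion, given $\psi\in U_\gamma$, its restriction to $\gamma^\perp\cap S_\sigma$ lands in $\C^*$ and extends to a character of the group $\gamma^\perp\cap\Z^n$; since the restriction map $(\C^*)^n\to\mbox{Hom}(\gamma^\perp\cap\Z^n,\C^*)$ is surjective, some torus element sends $\psi_\gamma$ to $\psi$.

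For (ii), read as $\overline{\mathcal{O}}_\gamma=\bigsqcup_{\gamma\prec\tau}\mathcal{O}_\tau$ with $\tau$ running over faces of $\sigma$ containing $\gamma$, I would analyze the closure in the ambient $\C^r$ coordinate-wise. If $\psi_k\in\mathcal{O}_\gamma$ converges to $\psi$, then for every generator $b_i\notin\gamma^\perp$ we have $\psi_k(b_i)=0$ for all $k$, hence $\psi(b_i)=0$; this forces $F_\psi\subset\gamma^\perp\cap S_\sigma$, which by the dictionary means $\psi\in\mathcal{O}_\tau$ for some $\tau\succ\gamma$. Conversely, for each such $\tau$ I would exhibit an explicit curve in $\mathcal{O}_\gamma$ limiting onto $x_\tau$: choose a lattice vector $v$ in the relative interior of $\tau$ and define $s(t)\in(\C^*)^n$ by $s(t)(u)=t^{\lip u,v\rip}$; then $s(t)\cdot x_\gamma$ lies in $\mathcal{O}_\gamma$ for $t\neq 0$ and tends to $x_\tau$ as $t\to 0$, because $\lip b_i,v\rip=0$ precisely when $b_i\in\tau^\perp$ and is strictly positive otherwise.

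The main obstacle is the semigroup-to-cone dictionary: one must verify not only that $F_\psi$ is a face of the semigroup but also that it is saturated and that the map $\gamma\mapsto\gamma^\perp\cap S_\sigma$ is an order-reversing bijection onto such faces, so that larger cone faces correspond to smaller support sets and the closure inclusions in (ii) go in the correct direction. Once this combinatorial bookkeeping is in place, the remaining ingredients — surjectivity of character restriction for (i) and the one-parameter-subgroup construction for (ii) — are essentially formal.
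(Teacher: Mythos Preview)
The paper does not actually prove this theorem: it is stated as a background fact with the citation ``see \cite[Theorem~3.2.6]{cox2024}'' immediately preceding it, and no proof is given. So there is no ``paper's own proof'' to compare against.

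Your argument is correct and is essentially the standard proof one finds in the cited reference. The key structural step---that the non-vanishing set $F_\psi$ of a semigroup homomorphism is of the form $\gamma^\perp\cap S_\sigma$ for a unique face $\gamma\prec\sigma$---is exactly what drives the orbit decomposition in the literature, and your treatments of both the transitivity of the torus on each $U_\gamma$ (via extension of characters) and the closure relation (one containment from closedness of the coordinate-vanishing locus, the other from a one-parameter subgroup limit) are the standard ones. Two minor remarks: first, for the inclusion $\overline{\mathcal{O}}_\gamma\subset\bigsqcup_{\tau\succ\gamma}\mathcal{O}_\tau$ it is cleaner to observe directly that the right-hand side equals $\{\psi:\psi(b_i)=0\text{ for all }b_i\notin\gamma^\perp\}$, which is Zariski-closed, rather than arguing via sequences; second, you correctly noted that the indexing in item~(ii) as printed (``$\bigsqcup_{\gamma\prec\sigma}\mathcal{O}_\sigma$'') is a typo for the union over faces $\tau$ with $\gamma\prec\tau\prec\sigma$.
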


The decomposition given in item $i.$ is a Whitney stratification of $X(S_{\sigma})$. Moreover, there exists a one-to-one correspondence between the faces of $\sigma$ and the faces of its dual cone $\check{\sigma}$ (see \cite[Chapter 1, \S 1.2]{cox2024}). We denote by $x_{\tau}$ the distinguished point of the face $\gamma$ of $\sigma$ which corresponds to the face $\tau$ of $ \check{\sigma}$ and we will simply call it the distinguished point of $\tau$.

Let $\tau = \langle b_{i_1}, \dots, b_{i_m} \rangle$ be a face of $\check{\sigma}$ and denote by $I_{\tau} = \{i_1, \dots, i_m\} \subset \{1, \dots, r\}$.
 We define the set
\begin{eqnarray*}
	X(\Ssig)^{I_{\tau}}:= \overline{\mathcal{O}}_{x_{\tau}},\;\mbox{\ \ where}\;x_i=0\;\mbox{if}\; i\notin I_{\tau},
    \end{eqnarray*}
 $x_{\tau} = (x_1,\dots,x_r)\in X(\Ssig)$ is the distinguished point of the face $\tau$, and $ \mathcal{O}_{x_{\tau}}$ denotes the orbit of the point $x_{\tau}$. Note that $X(\Ssig)^{I_{\tau}}$ is a toric variety, since it is the closure of an orbit.    
Similarly, define
\begin{eqnarray*}	{X(\Ssig)^*}^{I_{\tau}}:=\mathcal{O}_{x_{\tau}},\;\mbox{\ \ where}\;x_i=0\;\mbox{if, and only if,}\; i\notin I_{\tau}.
\end{eqnarray*}
 Consequently, $X(\Ssig)^{\emptyset}=X(\Ssig)^*{}^\emptyset=\{0\}$ and ${X(\Ssig)^*}^{\{1,\dots,r\}}$ is the dense orbit homeomorphic to the algebraic torus $(\C^*)^n$. Moreover, if $U = (U_1, \dots, U_r)\in X(\Ssig)^*{}^{I_{\tau}}$, then $U_{i_1},\dots,U_{i_m}\in\C^*$  and all other coordinates are  zero.

We also remark that $X(\Ssig)^*{}^{I_{\tau}}$ is contained in the smooth locus of $X(\Ssig)$, since $X(\Ssig)^*{}^{I_{\tau}}$ is an orbit of the torus action. 
\begin{obs}\label{obsI}
 The subset $I_{\tau}$ is not arbitrary within $\{1,\dots, r\}$, it depends on $\check{\sigma}$. For example, consider the toric variety $X(\Ssig)\subset\C^4$ associated with the semigroup $\Ssig=\lip e_2+2e_3,2e_1+e_2,e_1+3e_3,e_1+e_2+e_3\rip$ (see Figure \ref{cone1}), in which $\sigma=\lip2e_1-4e_2+2e_3,3e_1+2e_2-e_3,-3e_1+6e_2+e_3\rip \subset \mathbb{R}^3$ and $\check{\sigma}=\lip e_2+2e_3,2e_1+e_2,e_1+3e_3\rip$. The possible subsets $I_{\tau}$ are $\emptyset$, $\{1,2,3,4\}$, $\{1,2,4\}$, $\{1,3\}$, $\{2,3\}$, $\{1\}$, $\{2\}$, and $\{3\}$. 
 For instance, the set $\{1,4\}$ cannot be a $I_{\tau}$, since $\lip e_2+2e_3,e_1+e_2+e_3\rip$ is not a face of $\check{\sigma}$.
 \end{obs}
 \begin{figure}[!thb]
     \centering
     \includegraphics[width=3.5cm]{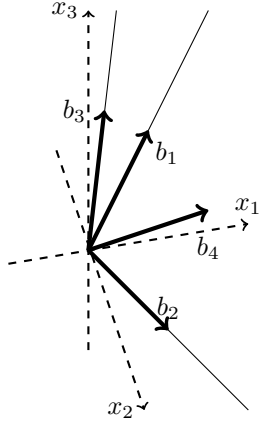}
     \caption{Polyhedral cone in $\mathbb{R}^3$}
     \label{cone1}
 \end{figure}

\subsection{Functions on toric varieties and non-degeneracy conditions} From now on, whenever we write $X(\Ssig)^*{}^I$ or $X(\Ssig){}^I$, we assume that the subset $I$ is given by $I_{\tau} = \{i_1, \dots, i_m\} \subset \{1, \dots, r\}$, for some face $\tau = \langle b_{i_1}, \dots, b_{i_m} \rangle$ of $\check{\sigma}$.

Given an $n$-dimensional toric variety $X(\Ssig)\subset \C^r$, there is a \emph{canonical embedding} of $(\C^*)^n$ into $X(\Ssig)$
\begin{equation}\label{canonicalh}
\begin{array}{cccc}
	h:  & (\C^*)^n & \longrightarrow & X(\Ssig)\\
	    &  \xi     &  \longmapsto    & (\xi^{b_1},\dots,\xi^{b_r}),
\end{array}
\end{equation}
where $\xi=(\xi_1,\dots,\xi_n)\in(\C^*)^n$, $
\xi^{b_i}=\xi_1^{\beta_i^1}\cdots\xi_n^{\beta_i^n}$ and $b_i=(\beta_i^1,\dots,\beta_i^n)\in \{b_1,\dots,b_r\}$. Let $$g(z)=\dsp\sum_{\Lambda\in\N^r\setminus\{0\}}a_\Lambda z^\Lambda$$ be a polynomial function on $X(\Ssig)$, i.e $g$ is the restriction of a polynomial function $G: \C^r\longrightarrow\C$ to the toric variety $ X(\Ssig)$. We define the polynomial function $L_g$ from $(\C^*)^n$ to $\C$, as follows
$$
 L_g(\xi):=(g\circ h)(\xi)=\dsp\sum_{\lambda\in \Ssig}{a_\Lambda}\xi^\lambda,
$$
where $\lambda=\dsp\sum_{i=1}^r \Lambda_ib_i$, and  $\Lambda=(\Lambda_1,\dots,\Lambda_r)\in\N^r\setminus \{0\}$. We call $\Lambda$ the \emph{representation} of $\lambda$ in $\N^r$. 

\begin{defn}
The convex hull of $\dsp\bigcup_{\lambda\in supp(L_g)}(\lambda+ \check{\sigma}) \subset \check{\sigma}$ is called the \emph{Newton polyhedron} of $g$ with respect to the coordinates $z$ and will be denoted by $\Gamma_+(g;z)$, where $supp(L_g):=\{\lambda\in\Z^n:a_\Lambda\neq0\}$.    
\end{defn}

Let $w \in \sigma \cap \mathbb{Z}^n $. The inner product $\lip w, x \rip$, for $x \in \Gamma_+(g; z)$, attains its minimum value because $\Gamma_+(g; z) \subset \check{\sigma}$ (see \cite{matsui2011} pg $119$). Let $d_w = \dsp\min_{x \in \Gamma_+(g; z)} \lip w, x \rip$. The \emph{face} $\Delta_w$ of $\Gamma_+(g; z)$ is then defined as the set where $\lip w, x \rip$ reaches this minimum on $\Gamma_+(g; z)$, that is,
$$ \Delta_w = \{x \in \Gamma_+(g; z) : \lip w, x \rip = d_w\}. $$
We refer to $w$ as a \emph{weight vector}.


Let $\tau$ be the face of $\check{\sigma}$ generated by $\lip b_{i_1}, \dots, b_{i_m} \rip$, and $w \in \sigma$ satisfying $\lip w, b_{i_j} \rip = 0$ for all $1 \leq j \leq m$, then $\Delta_w$ is a non-compact face of the Newton polyhedron $\Gamma_{+}(g;z)$. On the other hand, if $\lip w, b_i \rip \neq 0$ for all $1 \leq i \leq r$, then $\Delta_w$ is a compact face of $\Gamma_+(g; z)$. The \emph{compact Newton boundary} of $g$ is defined as the union of all compact faces of $\Gamma_+(g; z)$ and is denoted by $\Gamma(g; z)$.


Let $\Delta$ be a face of $\Gamma_+(g;z)$. The \emph{face function} $g_\Delta$ is defined by
$$
g_\Delta(z)=\sum_{\lambda\in\Delta\cap\Ssig}a_\Lambda z^\Lambda
$$
where $\Lambda=(\Lambda_1,\dots,\Lambda_r)$ is the representation of $\lambda$ in $\N^r$. Therefore, we can consider the polynomial function $(L_g)_{\Delta}$ on $(\mathbb{C}^*)^n$ associated with the face function $g_\Delta$, given by
$$
(L_g)_{\Delta}(\xi):=(g_\Delta\circ h)(\xi)=\sum_{\lambda\in\Delta\cap \Ssig}a_\Lambda\xi^\lambda.
$$

\begin{exem}\label{example1}
 Let $X(\Ssig)\subset\C^3$ be the toric variety determined by $\sigma=\lip e_2,2e_1-e_2\rip$, then $\check{\sigma}=\lip e_1,e_1+2e_2\rip$ and $\Ssig=\lip e_1,e_1+e_2,e_1+2e_2\rip \subset \mathbb{Z}^2$.
 Given the polynomial function     
 $$g(z_1,z_2,z_3)=z_1^4+z_1^2z_2+z_1z_2^2-z_1z_2z_3^2\;\;\;\Rightarrow\;\;\; L_g(\xi_1,\xi_2)=\xi_1^4+\xi_1^3\xi_2+\xi_1^3\xi_2^2-\xi_1^4\xi_2^5,
 $$
    the compact faces of $\Gamma_{+}(g;z)$ correspond to the line segments $\overline{AB}$, $\overline{BC}$, and $\overline{CD}$  where $A=(4,0)$, $B=(3,1)$, $C=(3,2)$ and $D=(4,5)$ (see Figure \ref{poli2}). On the other hand, the polynomial function $L_g$ on $\mathbb{C}^2$ has one compact face corresponding to the line segment $\overline{AB}$ (see Figure \ref{poli1}). 
\end{exem}

\begin{obs}\label{obsfc} We observe that, in general, the compact faces of the polynomial function $L_g$ on $\mathbb{C}^n$ also correspond to compact faces of $g$ on $X(S_\sigma)$.

\begin{figure}[!thb]
    \begin{minipage}{0.45\linewidth}
    \centering
        \includegraphics[width=3cm]{polyex11.tikz}
        \caption{Newton polyhedron on $\C^2$}
        \label{poli1}
    \end{minipage}
    \begin{minipage}{0.45\linewidth}
    \centering
        \includegraphics[width=2.75cm]{polyex12.tikz}
    \caption{Newton polyhedron on $X(\Ssig)$ }
    \label{poli2}
    \end{minipage}
\end{figure}
\end{obs}

\begin{defn}\label{nondeg}
    The polynomial function $g$ on $X(\Ssig)$ is said to be \emph{non-degenerate} if, for any compact face $\Delta\subset\Gamma(g;z)$, the hypersurface $(L_g)_\Delta^{-1}(0)$ has no singular points in $(\C^*)^n$. In other words, the Jacobian matrix $J(L_g)_\Delta(\xi)\neq0$ for all $\xi\in (L_g)_\Delta^{-1}(0)\cap(\C^*)^n$.
\end{defn}

\begin{obs}\label{obsng}
    Let $g$ be non-degenerate, and let $\Delta\subset\Gamma(g;z)$ be a compact face. According to the chain rule, $g_\Delta$ has no critical points on the dense orbit $X(\Ssig)^*{}^{\{1,\dots,r\}}$. 
\end{obs}

\begin{exem}\label{nondegexem}
    Let $X(\Ssig)\subset\C^3$ be the toric variety given in Example \ref{example1}. Considering the polynomial function $g(z_1,z_2,z_3)=z_1^4+z_2^4z_3-z_2^2z_3^2$, we have 
    $$L_g(\xi_1,\xi_2)=\xi_1^4+\xi_1^5\xi_2^6-\xi_1^4\xi_2^6.$$ 
    The Newton polyhedron $\Gamma_+(g;z)$ has three compact faces: the points $A=(4,0)$ and $B=(4,6)$, and the line segment $\Delta=\overline{AB}$, connecting $A$ to $B$. The polynomial function $(L_g)_\Delta$ associated with the face function $g_\Delta$ is $(L_g)_\Delta(\xi_1,\xi_2)=\xi_1^4-\xi_1^4\xi_2^6$, which has no critical points on $(\C^*)^2$. The same holds for $g_A$ and $g_B$. Therefore, $g$ is non-degenerate on $X(\Ssig)$. However, considering the cone $\sigma=\lip e_1,e_2,e_3\rip$, then $S_{\sigma}=\lip e_1,e_2,e_3\rip \subset \mathbb{Z}^3$, and $X(S_{\sigma})=\C^3$. Now taking the same polynomial function $G(z_1,z_2,z_3)=z_1^4+z_2z_3^4-z_2^2z_3^2$, we have $L_G=G$ and $G$ is degenerate. Indeed, $(L_G)_{\Delta'}(\xi_1,\xi_2,\xi_3)=\xi_2\xi_3^4-\xi_2^2\xi_3^2$ has critical points in $(\C^*)^3$, where $\Delta'=\overline{CD}$ is  
    the compact face  which is the line segment connecting the points $C=(0,1,4)$ and $D=(0,2,2)$.
\end{exem}


One immediate difference between studying polynomial functions on $\mathbb{C}^r$ and on an $n$-dimensional toric variety \(X(S_{\sigma}) \subset \mathbb{C}^r\) lies in the ambient space where the Newton polyhedron is constructed. In the case of functions on \( \mathbb{C}^r \), the Newton polyhedron is contained in \( \mathbb{R}^r_+ \), whereas in the case of \( X(S_{\sigma}) \), it lies in \( \mathbb{R}^n_+ \). In this sense, the following lemma shows that a specific inner product is preserved, even across these different ambient settings.

\begin{lem}\label{pesos}
    Let $g$ be a polynomial function on a toric variety $X(\Ssig)\subset\C^r$, and let $w=(w_1,\dots,w_n)$ be a vector in $\sigma$ such that $w_i>0$ for $1\leq i\leq n$. If  $\Delta_w\subset\Gamma(g;z)$ is the compact face associated with $w$, then for any $\lambda \in \Delta_w \cap S_{\sigma}$ we have $\lip W,\Lambda\rip=\lip w,\lambda\rip$, where $\Lambda$ is the representation of $\lambda$ in $\mathbb{N}^r$ and $W=(\lip w,b_1\rip,\dots,\lip w,b_r\rip)$. 
\end{lem}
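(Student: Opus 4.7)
The identity in the lemma is essentially a bilinearity statement, once one unpacks the meaning of "$\Lambda$ is the representation of $\lambda$ in $\mathbb{N}^r$". By the definition given earlier in the paper, this simply means that $\lambda=\sum_{i=1}^r \Lambda_i b_i$, where $\Lambda=(\Lambda_1,\dots,\Lambda_r)\in\mathbb{N}^r$ and $\{b_1,\dots,b_r\}$ is the fixed ordered generating system of $S_\sigma$. I would start directly from this relation, which lives in $\mathbb{Z}^n$.

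Applying the Euclidean inner product of $\mathbb{R}^n$ on the left and using its bilinearity, I would then compute
\[
\langle w,\lambda\rangle=\Big\langle w,\sum_{i=1}^{r}\Lambda_i b_i\Big\rangle=\sum_{i=1}^{r}\Lambda_i\,\langle w,b_i\rangle.
\]
By the definition of $W=(\langle w,b_1\rangle,\dots,\langle w,b_r\rangle)\in\mathbb{R}^r$, the right-hand side equals $\sum_{i=1}^r \Lambda_i W_i=\langle W,\Lambda\rangle$, now viewed as the standard inner product in $\mathbb{R}^r$. Combining the two equalities yields the claim.

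A small point worth recording is the independence of the identity from the chosen representation: the $b_i$ may satisfy integer relations, so $\Lambda$ need not be unique. However, any syzygy $\sum c_i b_i=0$ with $c_i\in\mathbb{Z}$ produces $\sum c_i\langle w,b_i\rangle=\langle w,0\rangle=0$, so different representations of the same $\lambda$ give the same value of $\langle W,\Lambda\rangle$. Finally, the hypotheses that $w\in\sigma$ with $w_i>0$ and that $\Delta_w$ is compact are not used in the algebraic manipulation; they are only present to ensure that $\Delta_w\subset\Gamma(g;z)$ is indeed a compact face and that the statement is meaningful in the applications that follow. For this reason I do not foresee any genuine obstacle: the proof is a one-line bilinearity computation plus an interpretation of the new coordinates $W$.
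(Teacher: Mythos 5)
Your proof is correct and follows essentially the same route as the paper's: both expand $\lambda=\sum_{i=1}^r\Lambda_i b_i$ and conclude by bilinearity of the inner product. Your additional remark that the identity is independent of the (possibly non-unique) representation $\Lambda$ is a pleasant bonus not present in the paper, but not needed for the statement as given.
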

\begin{proof}
    Since $\Lambda$ is the representation of $\lambda$ in $\mathbb{N}^r$, there exist $\Lambda_1,\dots,\Lambda_r\in\N$ such that $    \lambda=\displaystyle\sum_{i=1}^r\Lambda_ib_i$.
    On the other hand $$\sum_{i=1}^r\Lambda_ib_i=\left(\sum_{i=1}^{r}\Lambda_i\beta_1^{i},\dots,\sum_{i=1}^{r}\Lambda_i\beta_n^{i}\right).$$
    Thus, we conclude the result by writing $\Lambda=(\Lambda_1,\dots,\Lambda_r)$.\end{proof}

\subsection{Essential non-compact faces and locally tame functions}As mentioned in the introduction, this work addresses non-isolated singularities. To achieve this, it is essential to consider not only the compact faces of the Newton polyhedron but also an additional class of faces. Following the ideas presented in \cite{oka1990}, we introduce the concept of \emph{essential non-compact face} within the context of toric varieties. 
    
\begin{defn}\label{essenc}
	Let $g$ be a polynomial function on $X(\Ssig)$. A face $\Delta$ of $\Gamma_+(g;z)$ is an \emph{essential non-compact face} if there exists $w \in \sigma$ satisfying:
	\begin{enumerate}
		\item [(i)] $\Delta = \Delta_w$ and $g\rvert_{X(\Ssig)^{I_w}}\equiv0$, where $I_w:=\{i\in\{1,\dots,r\}:\lip w,b_i\rip=0\}$,
		\item [(ii)] for any $i\in I_w$ and any point $\alpha\in\Delta$, the half-line $\alpha+\R_+b_i\subset\Delta$.
	\end{enumerate}
	The set \( I_w \) does not depend on the choice of $w$.
 Therefore, it is called the \emph{non-compact direction} of $\Delta$ and is denoted by $I_\Delta$. The \emph{essential non-compact Newton boundary} of $g$ is defined as the union of $\Gamma(g;z)$ with the essential non-compact faces, which we denote by $\Gamma_{nc}(g;z)$.
\end{defn}

We denote by $\mathscr{I}_{nv}(g)$ (respectively, $\mathscr{I}_v(g)$) the collection of subsets $I\subset\{1,\dots,r\}$ such that $g\rvert_{{X(\Ssig)^I}}\not\equiv0$ (respectively, $g\rvert_{{X(\Ssig)^I}}\equiv0$). For any $I\in\mathscr{I}_{nv}$ (respectively, any $I\in\mathscr{I}_{v}$) the variety $X(\Ssig)^I$ is referred to as the \emph{non-vanishing} (respectively, \emph{vanishing}) variety.

\begin{exem}\label{goodex}
     Let $X(\Ssig)\subset\C^4$ be the toric variety generated by the semigroup $\Ssig=\lip e_2+2e_3,2e_1+e_2,e_1+3e_3,e_1+e_2+e_3\rip$ (see Remark \ref{obsI}). Consider the polynomial function   $$g(z)=z_1^2z_3^3+z_2^2z_3^3+z_3^4-5z_3^3z_4^3$$
     on $X(\Ssig)$. The non-compact face $\Delta=\overline{AB}+\R_+b_1+\R_+b_2+\R_+b_4$ is essential, where $\overline{AB}$ is the edge connecting the points $A = (3,2,13)$ and $B = (7,2,9)$. Indeed, choosing $w = (1,-2,1)$, which is orthogonal to $b_1$, $b_2$, and $b_4$, we obtain $\Delta = \Delta_w$ and $I_{\Delta} = \{1,2,4\}$, implying that $g_{|X(\Ssig)^{{\{1,2,4\}}}}\equiv0$. However, the non-compact face $\Theta=\overline{AC}+\R_+b_3$ (as well as $\Theta'=\overline{BC}+\R_+b_3$), where $C=(4,0,12)$ are not essential, since $X(\Ssig)^{\{3\}}$ is not a vanishing variety (See Figure \ref{fig1}). 
\end{exem}

\begin{figure}[!thb]
    \centering
    \includegraphics[width=6cm]{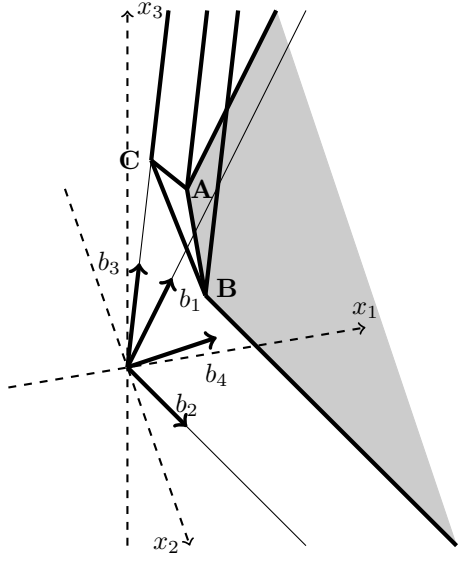}
    \caption{Essential non-compact face}
    \label{fig1}
\end{figure}

Let $X(\Ssig) \subset \mathbb{C}^r$ be the toric variety generated by the semigroup $\Ssig= \lip b_1,\cdots,b_r \rip$ and let $I\subset\{1,\dots,r\}$. We define the map
\begin{equation}\label{canonicalhI}
\begin{array}{cccc}
h^I: & (\C^*)^n & \longrightarrow & X(\Ssig)^I\\
     &  \xi     &   \longmapsto   & h^I(\xi),
\end{array}
\end{equation}
where $h_i(\xi)=0$ if $i\notin I$, and $h_i(\xi)=\xi^{b_i}$ otherwise. Consequently, we may also consider the restriction of the polynomial function $L_g$ to $X(\Ssig)^I$. In this case, we use the notation
$$
L_g^I(\xi):=(g_{|X(\Ssig)^I}\circ h^I)(\xi).
$$

\begin{prop}\label{propdeg}
    Let $g$ be a non-degenerate polynomial function on $X(\Ssig)\subset\C^r$. Then, for any $I\in\mathscr{I}_{nv}(g)$, the restriction $g\rvert_{X(\Ssig)^I}$ is non-degenerate as a function of variables $z_I:=\{z_i:i\in I\}$.
\end{prop}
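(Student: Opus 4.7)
The plan is to show that every compact face $\Delta'$ of $\Gamma(g|_{X(\Ssig)^I}; z_I)$ lifts to a compact face $\Delta_w$ of $\Gamma(g;z)$ whose associated face function agrees with that of $\Delta'$, so that the non-degeneracy of $g$ transfers directly to $g|_{X(\Ssig)^I}$. Writing $I = I_\tau$ with $\tau = \lip b_i : i \in I\rip$ a face of $\check{\sigma}$, the key combinatorial input is the extremality property of faces of $\check{\sigma}$: since $\tau = \check{\sigma} \cap \{\ell = 0\}$ for a supporting linear form $\ell \geq 0$ on $\check{\sigma}$, any expression $\lambda = \sum_{i=1}^r \Lambda_i b_i$ with $\Lambda_i \in \N$ and $\lambda \in \tau$ forces $\ell(b_i)\Lambda_i = 0$ for every $i$, hence $\Lambda_j = 0$ whenever $b_j \notin \tau$. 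Under the standing assumption $\{i : b_i \in \tau\} = I_\tau$ (implicit in the paper's definition of $X(\Ssig)^{*I_\tau}$), this yields $\Gamma_+(g|_{X(\Ssig)^I}; z_I) = \Gamma_+(g;z) \cap \tau$ and identifies the coefficient of $\xi^\lambda$ in $L_g$ with the corresponding one in $L_g^I$ for every $\lambda \in \tau$.

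Given $\Delta'$ compact, I fix a defining weight $w' \in \sigma_\tau$ in the relative interior (so $\lip w', b_i \rip > 0$ for all $i \in I$) with $\Delta_{w'} = \Delta'$, and then extend it by setting $w := w' + Cv$, where $v$ lies in the relative interior of the dual face $\gamma = \sigma \cap \tau^\perp$ of $\sigma$ (so $\lip v, b_j \rip > 0$ for $j \notin I$ and $\lip v, \cdot \rip$ vanishes on $\tau$) and $C > 0$ is taken large. Then $\lip w, b_k \rip > 0$ for every $k \in \{1,\dots,r\}$, while the penalty $C\lip v, \lambda \rip > 0$ for $\lambda \notin \tau$ forces the minimum of $\lip w, \cdot \rip$ on $\Gamma_+(g;z)$ to be attained only on $\tau \cap \Gamma_+(g;z)$, where $\lip w, \cdot \rip = \lip w', \cdot \rip$ by Lemma \ref{pesos}. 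Hence $\Delta_w = \Delta'$ is a compact face of $\Gamma(g;z)$.

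By the combinatorial identification above, $(L_g)_{\Delta_w} = (L_g^I)_{\Delta'}$ as Laurent polynomials on $(\C^*)^n$, and the non-degeneracy of $g$ applied to the compact face $\Delta_w$ guarantees this polynomial has no critical points on its zero locus in $(\C^*)^n$. To conclude non-degeneracy of $g|_{X(\Ssig)^I}$ in the toric sense (i.e., through the canonical embedding $h^\tau : (\C^*)^k \to X(\Ssig)^I$ of the sub-toric variety, where $k = \dim \tau$), I factor $h^I = h^\tau \circ p$ through the surjective torus morphism $p : (\C^*)^n \twoheadrightarrow (\C^*)^k$ induced by the lattice inclusion $V_\tau \cap \Z^n \hookrightarrow \Z^n$, where $V_\tau$ denotes the linear span of $\tau$. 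Since $p$ is a submersion, the chain rule $J(L_g^I)_{\Delta'}(\xi) = J(L_{g|_{X(\Ssig)^I}})_{\Delta'}(p(\xi)) \cdot Jp(\xi)$ with $Jp(\xi)$ of full rank $k$ transfers the non-vanishing of the Jacobian down to $(L_{g|_{X(\Ssig)^I}})_{\Delta'}$ at every $\eta = p(\xi) \in (\C^*)^k$ (surjectivity covers all such $\eta$), establishing the claim. The most delicate step I expect is verifying $w \in \sigma$ for $C$ sufficiently large: this reduces to bounding $-\lip w', u \rip / \lip v, u \rip$ from above as $u$ ranges over $\check{\sigma} \setminus \tau$, which follows from a compactness argument on $\check{\sigma} \cap S^{n-1}$ combined with the fact that $\lip v, u \rip > 0$ precisely when $u \notin \tau$.
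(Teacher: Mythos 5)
Your proposal is correct and follows essentially the same strategy as the paper's proof: realize each compact face of $\Gamma_+(g\rvert_{X(\Ssig)^I};z_I)$ as a compact face of $\Gamma_+(g;z)$ by adding to its defining weight a large multiple of a vector orthogonal to $\tau$, observe that the corresponding face functions coincide, and descend the non-vanishing of the Jacobian through the torus map. Your write-up is in fact more explicit than the paper's at two points it elides (the extremality argument identifying $\Gamma_+(g\rvert_{X(\Ssig)^I};z_I)$ with $\Gamma_+(g;z)\cap\tau$, and the verification that the perturbed weight lies in $\sigma$), but the underlying idea is identical.
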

\begin{proof}
    Let $\tau=\lip b_i\rip_{i\in I}$ be the face of the dual cone $\check{\sigma}$ generated by the vectors $b_i$, for $i\in I$. Let $\Delta$ be a compact face of the Newton polyhedron $\Gamma_+(g\rvert_{X(\Ssig)^I};z_I) \subset \Gamma_+(g;z)$, and let $w\in\sigma$ be a weight vector such that   
    $$\Delta_w = \{x \in \Gamma_+(g\rvert_{X(\Ssig)^I};z_I) : \lip w, x \rip = d_w\}=\Delta, \ \ \text{where} \ \ d_w = \dsp\min_{x \in \Gamma_+(g\rvert_{X(\Ssig)^I};z_I)} \lip w, x \rip.$$ If $R_{\check{\sigma}}=\lip v_1,\dots,v_N\rip$ is the semigroup associated with $\sigma$, then
    \[
    w=\sum_{i=1}^N\lambda_iv_i, \;\; \lambda_i\in\N\;\;\mbox{for}\;\;1\leq i\leq N.
    \]
    Now, consider $q=\dsp\sum_{i=1}^{N}q_iv_i$, where $q_i=\lambda_i$ for $i\in I$ and, $q_i=V\in\N^*$ for $i\not\in I$.
    Note that $V$ can be taken sufficiently large so that $\Delta_q=\Delta$. Thus,
    $$
    g_\Delta(z)=(g\rvert_{X(\Ssig)^I})_\Delta(z).
    $$
    Since $g$ is non-degenerate, it follows that $(L_g)_\Delta$ has no critical points in $(\C^*)^n$. As $g_\Delta$ contains only variables $z_i$, with $i\in I$, and  $X(\Ssig)^I \subset X(\Ssig)$ is a toric variety of dimension $n_I < n$,  $(L_g^I)_\Delta$ also has no critical points on $(\C^*)^{n_I}$.  
\end{proof}


Let $X(\Ssig)^I$ be a toric variety with $I=\{i_1,\dots,i_m\}$ and $m\leq r$. Given $u=(u_1,\dots,u_r)\in X(\Ssig)^*{}^I$, then $u_{i_1},\dots,u_{i_m}\in\C^*$ and $u_{i_j}=0$ for $i_j\not\in I$. Therefore, we define
$$
\C^{*\{1,\dots,r\}}_u:=\{(z_1,\dots,z_r)\in(\C^*)^r:z_{i_j}=u_{i_j} \;\mbox{for}\;1\leq j\leq m\}.
$$

\begin{defn}
    Let $\Delta\subset\Gamma_{nc}(g;z)$ be an essential non-compact face of the polynomial function $g$ on $X(\Ssig)$, and let $w=(w_1,\dots,w_n)\in\sigma$ be a weight vector satisfying the conditions (i) and (ii) of the Definition \ref{essenc}. Suppose that 
    $I_\Delta=I_w=\{i_1,\dots,i_m\}$, i.e. $\lip w,b_i\rip=0$, if and only if $i=i_1,\dots,i_m$, and $I_w\in\mathscr{I}_v(g)$. We say that the face function
    $$
    g_\Delta(z)=\sum_{\lambda\in\Delta\cap S_{\sigma}}a_\Lambda z^{\Lambda}
    $$
    is \emph{locally tame} if there exists a positive number $r(g_\Delta)>0$ such that for any $u\in {X(\Ssig)^*}^I$ with
    \begin{equation}\label{desg}
    |u_{i_1}|^2+\cdots+|u_{i_m}|^2<r(g_{\Delta})^2,    
    \end{equation}
    $g_\Delta$ has no critical points in $\C^{*\{1,\dots,r\}}_{u}$ as a function of the ($r-m$)-variables $z_{i_{m+1}},\dots,z_{i_r}$, where $\{i_{m+1},\dots,i_r\}:=\{1,\dots,r\}\setminus\{i_1,\dots,i_m\}$ and $u_{i_1},\dots,u_{i_m}\in\C^*$ are the coordinates of $u$. We say that $g$ is \emph{locally tame along a vanishing variety} $X(\Ssig)^I$ if, for any essential non-compact face $\Delta$ with $I_\Delta=I$, the function $g_\Delta$ is locally tame. Finally, we say that $g$ is \emph{locally tame along the vanishing  varieties} if it is locally tame along $X(\Ssig)^I$ for any $I\in\mathscr{I}_v$.
\end{defn}

\begin{obs}
    Given $I\in\mathscr{I}_v(g)$, we use the following notation: let $\overline{r}(g)$ denote the supremum of the numbers $r(g_\Delta)$ satisfying condition \ref{desg}. We define
    $$r_I(g):=\dsp\inf_{I_\Delta=I} \overline{r}(g_\Delta) \ \ \text{and} \ \ r_{nc}(g):=\dsp\inf_{I\in\mathscr{I}_v} r_I(g).$$
\end{obs}


\begin{exem} Consider $X(\Ssig)$ as in Example \ref{example1} and let $g(z_1,z_2,z_3)=z_1^2z_3^2-z_2^3z_3^2+z_3^3$ be a polynomial function on $X(\Ssig)$. There is only one essential non-compact face: $\Delta=A+\R_+b_1$, where $A=(4,2)$. We have $I_{\Delta}=\{1\}$, thus given $u=(u_1,0,0)\in X(\Ssig)^*{}^{\{1\}}$, the function $g_{\Delta}(u_1,z_2,z_3)=u_1^2z_3^2$, has no critical points in $\C_{u}^{*\{1,2,3\}}$. Hence, $g$ is locally tame along the vanishing  variety $X(\Ssig)^*{}^{\{1\}}$. 
 However, considering the cone $\sigma=\lip e_1,e_2,e_3\rip$, then $S_{\sigma}=\lip e_1,e_2,e_3\rip \subset \mathbb{Z}^3$, and $X(S_{\sigma})=\C^3$. The same polynomial function $G(z_1,z_2,z_3)=z_1^2z_3^2-z_2^3z_3^2+z_3^3$ on $\mathbb{C}^3$ is not locally tame along the vanishing  varieties (see \cite[Example $2.8$]{eyraloka1}). 
\end{exem}

\section{Local tameness and admissible families}


Let $(t,z):=(t,z_1,\dots, z_r)$ be coordinates for $ \C\times X(\Ssig)$, let $0\in U\subset X(\Ssig)$ be an open set containing $0$, let $0\in D\subset\C$ be an open disc, and let
$$
\begin{array}{cccc}
    f: & (D\times U, D\times\{0\}) & \longrightarrow & (\C,0)  \\
       &       (t,z)               &    \longmapsto  & f(t,z)
\end{array}
$$
be a polynomial function. With this notation, $f(D\times\{0\})=0$. We write $f_t(z)=f(t,z)$ and $V(f_t)\subset U$ for the hypersurface defined by $f_t(z)$. As in the previous section, $f$ is the restriction of a polynomial function $F:\C\times\C^r\longrightarrow\C$ to the toric variety $\C\times X(\Ssig)$.

In \cite[Proposition 3.1]{eyraloka1},  Eyral and Oka proved an uniform version
of \cite[ChapterIII, Lemma(2.2)]{okabook} and \cite[Theorem 19]{Oka3}. Here we
extend \cite[Proposition 3.1]{eyraloka1} to families of non-degenerate hypersurfaces on toric varieties.

\begin{prop}\label{propsmooth}
    Suppose that for all $t$ sufficiently small, the following conditions are satisfied:
    \begin{enumerate}
        \item [(i)] the compact Newton boundary $\Gamma(f_t;z)$ is independent of $t$,
        \item [(ii)] $f_t$ is non-degenerate.
    \end{enumerate}
    Then there exists a positive number $E>0$ such that for any $I\in\mathscr{I}_{nv}(f_0)$ and any $t$ sufficiently small, the set $V(f_t)\cap {X(\Ssig)^*}^I\cap B_E$ is non-singular and intersects transversely with $S_\varepsilon$ for any $\varepsilon<E$, where $B_E$ and $S_\varepsilon$ are the open ball and sphere centered at the origin $0\in\C^r$ and radius $E$ and $\varepsilon$, respectively. 
\end{prop}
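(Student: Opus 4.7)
The plan is to argue by contradiction, stratum by stratum, transporting the problem from $X(\Ssig)^{*I}$ to the algebraic torus $(\C^*)^n$ via the map $h^I$ in \eqref{canonicalhI}, so that the asymptotic analysis supplied by the curve selection lemma isolates a face of the Newton polyhedron on which non-degeneracy yields a contradiction. Since $\mathscr{I}_{nv}(f_0)$ is finite, and since the constancy of $\Gamma(f_t;z)$ together with Proposition \ref{propdeg} guarantees that for each $I\in\mathscr{I}_{nv}(f_0)$ the restriction $f_t\rvert_{X(\Ssig)^I}$ is itself non-degenerate with a $t$-constant compact Newton boundary, it suffices to exhibit a uniform radius $E_I>0$ for each fixed such $I$ and then set $E:=\min_I E_I$.

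Fix $I\in\mathscr{I}_{nv}(f_0)$ and suppose no such $E_I$ exists. Then there are sequences $t_k\to 0$ and $p_k\in V(f_{t_k})\cap X(\Ssig)^{*I}$ with $\|p_k\|\to 0$ such that either $p_k$ is a critical point of $f_{t_k}\rvert_{X(\Ssig)^{*I}}$, or $V(f_{t_k})\cap X(\Ssig)^{*I}$ fails to meet $S_{\|p_k\|}$ transversely at $p_k$. The corresponding set of bad triples $(t,z,\varepsilon)$ is semi-algebraic in $\C\times X(\Ssig)^{*I}\times\R_+$ and has the origin in its closure, so the curve selection lemma supplies a real analytic arc $(t(s),z(s))$, $s\in[0,\delta)$, with $(t(0),z(0))=(0,0)$, $z(s)\in V(f_{t(s)})\cap X(\Ssig)^{*I}$ for $s>0$, and the bad behavior persisting along the arc.

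Write $z(s)=h^I(\xi(s))$ with $\xi(s)\in(\C^*)^n$ and expand $\xi_j(s)=a_js^{w_j}+\cdots$ with $a_j\neq 0$; set $w=(w_1,\dots,w_n)$. The conditions $\|z(s)\|\to 0$ and $z(s)\in X(\Ssig)^{*I}$ force $\lip w,b_i\rip>0$ for every $i\in I$, so the face $\Delta_w$ of the restricted Newton polyhedron $\Gamma_+(f_0\rvert_{X(\Ssig)^I};z_I)$ is compact. Substituting the arc into $f_{t(s)}(z(s))=0$ and using Lemma \ref{pesos} to match the weighted order on $\N^r$ with the one on $\Z^n$, the leading term in $s$ equals $(L_{f_0}^I)_{\Delta_w}(a)\,s^{d_w}$ with $a=(a_1,\dots,a_n)\in(\C^*)^n$, whence $(L_{f_0}^I)_{\Delta_w}(a)=0$. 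The same leading-order analysis applied to the critical-point or non-transversality equation forces the vanishing at $a$ of an appropriate partial derivative of $(L_{f_0}^I)_{\Delta_w}$, contradicting the non-degeneracy of $f_0\rvert_{X(\Ssig)^I}$ given by Proposition \ref{propdeg}.

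The main obstacle is handling the parameter $t$: one has to ensure that the leading $s$-order of $f_{t(s)}(z(s))$ really is read off from a face of $\Gamma_+(f_0;z)$ and is not destroyed by a $t$-driven cancellation. This is where condition (i) is decisive, since invariance of $\Gamma(f_t;z)$ along the family prevents any monomial supported on $\Delta_w$ from appearing or disappearing as $t$ varies, and therefore rules out any cancellation at the leading $s$-order. With this control in place, the argument runs in parallel with the proofs of \cite[Chapter III, Lemma 2.2]{okabook}, \cite[Theorem 19]{Oka3}, and the uniform version \cite[Proposition 3.1]{eyraloka1}, the toric chart $h^I$ and Lemma \ref{pesos} providing the bridge between the combinatorics of $\Ssig$ and the asymptotic analysis on $(\C^*)^n$.
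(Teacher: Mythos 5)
Your overall strategy for the smoothness half --- fix $I\in\mathscr{I}_{nv}(f_0)$, argue by contradiction, pull the bad set back to $D\times(\C^*)^n$ via $h^I$, apply the curve selection lemma, read off the leading $s$-order from the compact face $\Delta_w$, and contradict the non-degeneracy of $f_0\rvert_{X(\Ssig)^I}$ supplied by Proposition \ref{propdeg} --- is exactly the paper's, and that half of your argument is sound (including your correct observation that $\lip w,b_i\rip>0$ for $i\in I$ forces $\Delta_w$ to be compact, and that the constancy of the Newton boundary prevents $t$-driven cancellation of the leading term).

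The gap is in the transversality half. Failure of transversality of $V(f_t)\cap X(\Ssig)^{*I}$ with the sphere $S_{\|p\|}$ at a point $p$ of the hypersurface does \emph{not} mean that a partial derivative of $f_t\rvert_{X(\Ssig)^I}$ vanishes at $p$; it means that the gradient is a \emph{nonzero} complex multiple $\lambda\,p$ of the position vector. Taking leading terms along the selected arc therefore does not produce a critical point of $(L^I_{f_0})_{\Delta_w}$ at $a$, so non-degeneracy is not what gets contradicted, and your sentence ``the same leading-order analysis applied to the \dots non-transversality equation forces the vanishing at $a$ of an appropriate partial derivative'' is not a valid deduction. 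What the paper does instead is: from $\nabla f_{t(s)}\rvert_{X(\Ssig)^I}(h^I(\xi(s)))=\lambda(s)h^I(\xi(s))$ it extracts the leading-order relations $\overline{\partial((f_0)_{\Delta_w}\rvert_{X(\Ssig)^I})/\partial z_i}(h^I(a))=\lambda_0 a^{b_i}$ for the indices $i$ realizing the minimal weight $W_i=\lip w,b_i\rip$ (and $=0$ for the remaining indices); it then uses that the face function is weighted homogeneous of weights $W$ and degree $d$, so Euler's identity together with $(f_0)_{\Delta_w}\rvert_{X(\Ssig)^I}(h^I(a))=0$ (which comes from $f_{t(s)}(z(s))\equiv0$) yields $0=\overline{\lambda_0}\sum_i W_i\|a^{b_i}\|^2$, a contradiction because $\lambda_0\neq0$, $W_i>0$ and $a^{b_i}\neq0$. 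You need to supply this Euler-identity step (or an equivalent) to close the transversality claim. A second, minor point: lumping the two failure modes into a single bad set is fine, but you should either treat them as two separate closed conditions before applying the curve selection lemma (as the paper does) or explain why a single analytic arc can be chosen to exhibit one fixed failure mode.
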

\begin{proof}
    Since the number of subsets $I\in \mathscr{I}_{nv}(f_0)$ is finite, it is sufficient to consider a fixed $I\in \mathscr{I}_{nv}(f_0)$ and prove the result for this fixed $I$. For simplicity, we may suppose $I=\{1,\dots,m\}$.

    First, we show the smoothness. Suppose that there exists a sequence of points $\{(t_N,z_N)\}\subset V(f_t)\cap\big(D\times {X(\Ssig)^*}^I\big)$ with
    $$
    (t_N,z_N)\rightarrow (0,0) 
    $$
    and for each $N\in \N$, $z_N$ is a critical point of the restriction $f_{t_N}\rvert_{X(\Ssig)^I}$. Then $(0,0)$ is in the closure of the set
    $$
    W=\{(t,z)\in D\times {X(\Ssig)^*}^I: {f_t}\rvert_{X(\Ssig)^I}(z)=0\;\mbox{and}\;d{f_t}\rvert_{X(\Ssig)^I}\equiv0\}.
    $$
    Consider $h:(\C^*)^n\longrightarrow X(\Ssig)$ the canonical homomorphism as in (\ref{canonicalh}) and $h^I:(\C^*)^n\longrightarrow X(\Ssig)^I$ its restriction to $X(\Ssig)^I$ as in (\ref{canonicalhI}). Then we can define the set 
    $$
    \mathcal{W}:=\{(t,\xi)\in D\times(\C^*)^n:  h^I(\xi)\in {X(\Ssig)^*}^I,\; L^I_{f_t}(\xi)=0\;\mbox{and}\;dL^I_{f_t}\equiv0\}.
    $$
    Since $\check{\sigma} \subset \mathbb{R}^n_+$, the coordinate functions of $h^I$ are monomial functions.  
    Moreover, $h^I$ is surjective when restricted to ${X(\Ssig)^*}^I$ and $(0,0)$ lies in the closure of $W$. Therefore, $(0,0)\in\C\times \C^n$ belongs to the closure of $\mathcal{W}$.   
    By the Curve Selection Lemma \cite{milnorcurvelemma}, there is a real analytic curve $p:[0,\varepsilon)\longrightarrow D\times \C^n$ such that 
    \begin{eqnarray}
        p(0) & = & (t(0),\xi(0))=(0,0) \label{z=0}\nonumber \\ 
        p(s) & = & (t(s),\xi(s))=(t(s),\xi_1(s),\dots,\xi_n(s))\in \mathcal{W}\;\mbox{if}\;s\not=0.\nonumber
    \end{eqnarray}
    For each $1\leq i\leq n$, consider the Taylor expansion 
    \begin{align*}
    t(s)    &=t_0s^{v_0}+\cdots \\
    \xi_i(s)&=a_is^{w_i}+\cdots    
    \end{align*}
    where $v_0,w_i\in\Z_+^*$, $t_0,a_i\in\C^*$ for $1\leq i\leq n$, and ``dots'' represent the higher-order terms. Let $a=(a_1,\dots,a_n)\in(\C^*)^n$ and $w=(w_1,\dots,w_n)$. Let $\Delta\subset\Gamma(L^I_{f_t};\xi)$ be a compact face of $L^I_{f_t}$ with respect to $\xi$ in $\R_+^n$, defined by the set 
    satisfying $\langle x,w\rangle=\displaystyle\sum_{i=1}^{n}x_iw_i=d$ with $x\in \Gamma_+(L^I_{f_t};\xi)$ and $d$ its minimal value as in \cite{okabook}. Since compact faces of $L^I_{f_t}$ are compact faces of $f_t{}\rvert_{X(\Ssig)^I}$ (see Remark \ref{obsfc}), we can consider $\Delta$ as a compact face in $\Gamma(f_t{}\rvert_{X(\Ssig)^I};z)
    =\Gamma(f_0{}\rvert_{X(\Ssig)^I};z)$. For each $1\leq i\leq n$,

    $$
    \dfrac{\partial(L^I_{f_{t(s)}})}{\partial \xi_i}(\xi(s))=\dfrac{\partial((L^I_{f_{t(s)}})_\Delta)}{\partial\xi_i}(a)s^{d-w_i}+\cdots
    $$
    where $(L^I_{f_t})_\Delta$ is the polynomial function associated with the face function $(f_t)_\Delta{}\rvert_{X(\Ssig)^I}$, and $\xi(s)\in\mathcal{W}$ for $s\neq0$. Thus, 
    $$
    0=\dfrac{\partial((L^I_{f_{t(s)}})_\Delta)}{\partial\xi_i}(a)s^{d-w_i}+\cdots \;\;\mbox{for}\;\;s\neq0,
    $$
    which implies that for every $1\leq i\leq n$
    $$
    \dfrac{\partial((L^I_{f_{t(s)}})_\Delta)}{\partial\xi_i}(a)=0.
    $$
    Consequently 
    $$
    \dfrac{\partial((L^I_{f_0})_\Delta)}{\partial\xi_i}(a)=0.
    $$
    Therefore, $a\in(\C^*)^n$ is a critical point of $(L^I_{f_{0}})_\Delta$. This implies that ${f_0}\rvert_{X(\Ssig)^I}$ is not non-degenerate, which contradicts Proposition \ref{propdeg}, since the polynomial function $f_0$ is non-degenerate and must remain non-degenerate on every $X(\Ssig)^I$ for $I \in \mathscr{I}_{nv}(f_0)$. 
    
    We also use a contradiction argument to prove transversality. Assume the existence of a sequence of points $\{(t_N,\xi_N)\}\subset D\times \C^n$, with $(t_N,\xi_N)\rightarrow(0,0)$ such that for all $N$, $$(t_N,h^I(\xi_N))\in V(f)\cap(D\times X(\Ssig)^*{}^I)\subset D\times\C^r$$ and does not intersect the sphere $S_{\| h^I(\xi_N)\|}$ transversely in $h^I(\xi_N)$. Therefore, $(0,0)\in D\times\C^n$ belongs to the closure of the set
    $$
    \widetilde{\mathcal{W}}=\{(t,\xi)\in D\times (\C^*)^n: f_t\rvert_{X(\Ssig)^I}(h^I(\xi))=0\;\mbox{and}\;\nabla f_t\rvert_{X(\Ssig)^I}(h^I(\xi))=\lambda h^I(\xi),\;\lambda\in\C^*\}, 
    $$
    in which
    $$
    \nabla {f_t\rvert_{X(\Ssig)^I}}(h^I(\xi))=\left(\; \overline{\dfrac{\partial {f_t\rvert_{X(\Ssig)^I}}}{\partial z_1}}(h^I(\xi)),\dots,\overline{\dfrac{\partial {f_t\rvert_{X(\Ssig)^I}}}{\partial z_m}}(h^I(\xi)),0,\dots,0 \;\right)
    $$ 
    is the gradient vector of $f_t\rvert_{X(\Ssig)^I}$ and the bar denotes the complex conjugate. By the Curve Selection Lemma \cite{milnorcurvelemma}, there is a real analytic curve $(t(s),\xi(s))=(t(s),\xi_1(s),\dots,\xi_n(s))$, with $s\in[0,\varepsilon)$, such that
    \begin{enumerate}
        \item $(t(0),\xi(0))=(0,0)$;
        \item  $(t(s),\xi(s))\in D\times(\C^*)^n$ for $s\neq0$;
        \item  $f_{t(s)}(h^I(\xi(s))=0$;
        \item  \label{it:item4} $\nabla f_{t(s)}\rvert_{X(\Ssig)^I}(h^I(\xi(s)))=\lambda(s) h^I(\xi(s))$.
    \end{enumerate}
        Consider the Taylor expansions $t(s)=t_0s^{v_0}+\cdots$ and $\xi_i(s)=a_is^{w_i}+\cdots$ for $1\leq i\leq n$, where $v_0,w_i\in\Z_+^*$, $t_0,a_i\in\C^*$ for $1\leq i\leq n$, and the Laurent expansion $\lambda(s)=\lambda_0s^u+\cdots$, in which $\lambda_0\in\C^*$ and $u \in\Z_+^*$. Note that $$h^I(\xi(s))=(z_1(s),\dots,z_m(s),0,\dots,0)\in\C^r$$ is a curve with $z_i(s)=\xi(s)^{b_i}$ for $1\leq i\leq n$. Thus, writing $a=(a_1,\dots,a_n)\in(\C^*)^n$ and $w=(w_1,\dots,w_n)$, we have $z_i(s)=a^{b_i}s^{W_i}+\dots$, where $W_i=\lip w,b_i\rip$ for $1\leq i\leq m$.
        
        Let $\Delta\subset\Gamma(f_t\rvert_{X(\Ssig)^I};z)=\Gamma(f_0\rvert_{X(\Ssig)^I};z)$ be the compact face associated with $w$ and $$d=\dsp\min_{x\in\Gamma(f_t\rvert_{X(\Ssig)^I};z)}\lip w,x\rip.$$ After reordering, we may assume without loss of generality that \( W_1 = \cdots = W_k < W_j \), where \( k < j \leq m \).
 Now, we observe that Lemma \ref{pesos} holds true for the vector $W=(W_1,\dots,W_m,0,\dots,0)$. Thus, by item \ref{it:item4}. above, we have $d-W_1=u+W_1$ and 
        \begin{equation}\label{eqderi}
            \overline{\dfrac{\partial {((f_0)_\Delta\rvert_{X(\Ssig)^I})}}{\partial z_i}}(h^I(a))=
        \begin{cases}
            \lambda_0a^{b_i}, &\;\mbox{for}\;1\leq i\leq k\\
            0,                &\;\mbox{for}\;k<i\leq m
        \end{cases}
        .
        \end{equation}
        
    As the polynomial function $(f_0)_\Delta\rvert_{X(\Ssig)^I}$ is weighted homogeneous with respect to the weights \(W\) and degree \(d\), by Euler’s identity, it follows that \begin{equation}\label{ideuler}
        d\cdot((f_0)_\Delta\rvert_{X(\Ssig)^I})(h^I(a))=\sum_{i=1}^{m}W_ia^{b_i}\dfrac{\partial {((f_0)_\Delta\rvert_{X(\Ssig)^I})}}{\partial z_i}(h^I(a)).
    \end{equation}
    Since $f_{t(s)}(z(s))=0$ for any $s\in[0,\varepsilon)$, we have $(f_0)_\Delta\rvert_{X(\Ssig)^I}(h^I(a))=0$. Then, by (\ref{ideuler})
    \[
    0=\sum_{i=1}^{m}W_ia^{b_i}\dfrac{\partial {((f_0)_\Delta\rvert_{X(\Ssig)^I})}}{\partial z_i}(h^I(a)),
    \]
    and applying (\ref{eqderi}), we get the contradiction
    \[
    0=\overline{\lambda_0}\sum_{i=1}^{k}W_i\|a^{b_i}\|^2,
    \]
    since, $W_i>0$ for $1\leq i\leq k$. 
\end{proof}

\begin{obs}\label{helpteo1}
    Following the notation of Proposition~\ref{propsmooth}, since the number of subsets \( I \in \mathscr{I}_{nv}(f_0) \) is finite, we can take \( \overline{E} \) to be the infimum of all values \( E \) for which
\[
V(f_t) \cap {X(\Ssig)^*}^I \cap B_E
\]
is non-singular and intersects transversely with \( S_\varepsilon \) for any \( \varepsilon < E \). Therefore,
\[
\bigcup_{I \in \mathscr{I}_{nv}(f_t)} V(f_t) \cap {X(\Ssig)^*}^I \cap B_{\overline{E}}
\]
is non-singular and intersects transversely with \( S_\varepsilon \) for any \( \varepsilon < \overline{E} \). 
\end{obs}

From now on, unless stated otherwise, we assume that for all $t$ sufficiently small, the following conditions hold:
\begin{enumerate}
    \item[(I)] $\Gamma_{nc}(f_t:z)$ (consequently, $\mathscr{I}_{nv}(f_t)$ and $\mathscr{I}_v(f_t)$) is independent of $t$;
    \item[(II)] $f_t$ is non-degenerate and locally tame along the vanishing varieties.
\end{enumerate}
By Proposition \ref{propsmooth} there exists $E>0$ such that for any $I\in \mathscr{I}_{nv}(f_t)$ (which is equal to $\mathscr{I}_{nv}(f_0))$ and $t$ sufficiently small
$$
V(f_t)\cap X(\Ssig)^*{}^I\cap B_E
$$
is non-singular. Therefore, in a sufficiently small open neighborhood $\mathscr{U}\subset D\times U$ of the origin in $\C\times X(\Ssig)$, the set $V(f)\cap (\C\times X(\Ssig)^*{}^I)$ is non-singular for any $I\in \mathscr{I}_{nv}(f_t)$. In such neighborhood $\mathscr{U}$ we can stratify $\C\times X(\Ssig)$ so that the hypersurface $V(f_t)$ is the union of strata. Namely, we will consider three types of strata:
\begin{enumerate}
    \item [(a)] $A_I=\mathscr{U}\cap[V(f)\cap(\C\times X(\Ssig)^*{}^I)]$, for $I\in\mathscr{I}_{nv}(f_0)$;
    \item [(b)] $B_I=\mathscr{U}\cap[(\C\times X(\Ssig)^*{}^I)\setminus V(f)\cap(\C\times X(\Ssig)^*{}^I)]$, for $I\in\mathscr{I}_{nv}(f_0)$;
    \item [(c)] $C_I=\mathscr{U}\cap(\C\times X(\Ssig)^*{}^I)$, for $I\in\mathscr{I}_v(f_0)$.
\end{enumerate}
The finite collection
$$
\mathscr{T}:=\{A_I,B_I:I\in\mathscr{I}_{nv}(f_0)\}\cup\{C_I: I\in\mathscr{I}_{v}(f_0)\}
$$
defines a stratification of $\mathscr{U}\cap\C\times X(\Ssig)^*{}^I$ such that $\mathscr{U}\cap V(f)$ is a union of strata. Note that for $I=\emptyset\in\mathscr{I}_v(f_0)$ the stratum $C_{\emptyset}=\mathscr{U}\cap(\C\times \{0\})$ is the $t$-axis. The collection $\mathscr{T}$ is called \emph{canonical stratification}.
\begin{defn}
    The family $\{f_t\}$ is said to be \emph{admissible} at $t=0$ if it satisfies the conditions (I) and (II) above and there exists a positive number $\rho>0$ such that for any sufficiently small $t$, $\inf\{E,r_{nc}(f_t)\}\geq\rho$, where $E$ is given by Proposition \ref{propsmooth}.
\end{defn}
If the family ${f_t}$ is admissible, then it is \emph{uniformly locally tame} along the vanishing varieties, as discussed in \cite[pg $104$]{eyraloka1}.

\section{Admissibility and Equisingularity}
This section is devoted to the proof of the main result, presented in the following theorem.
\begin{teo}\label{mainteo}
    If the family of polynomial functions $\{f_t\}$ (as in the previous section) is admissible, then the canonical stratification $\mathscr{T}$ of $\mathscr{U}\cap (\C\times X(\Ssig))$ is a Whitney stratification. Therefore, the corresponding family of hypersurfaces $\{V(f_t)\}$ is Whitney equisingular.
\end{teo}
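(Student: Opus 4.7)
My plan is to verify that the canonical stratification $\mathscr{T}$ satisfies the Whitney regularity conditions at every point, working pair-by-pair through the incidences of strata. Since Whitney's condition (b) implies (a), I will focus on (b). For a pair of strata $(S_\alpha, S_\beta)$ with $S_\beta \subset \overline{S_\alpha}$, the condition (b) at a point $p \in S_\beta$ asserts that for any sequences $\{x_n\} \subset S_\alpha$ and $\{y_n\} \subset S_\beta$ both converging to $p$, with limit tangent plane $T_{x_n} S_\alpha \to T$ and limit secant direction $\overline{x_n y_n} \to \ell$, one has $\ell \subset T$. I would begin by classifying the admissible incidences. Since the $t$-axis is tangent to every stratum by construction, and since the decomposition $\bigsqcup_I X(S_\sigma)^{*I}$ of $X(S_\sigma)$ is already a Whitney stratification (item (i) of Theorem~\ref{decorb}), the problem reduces to controlling how the strata $A_I, B_I$ behave as one approaches either another $A_J, B_J$ with $J \subsetneq I$, $J \in \mathscr{I}_{nv}(f_0)$, or a vanishing stratum $C_J$ with $J \in \mathscr{I}_v(f_0)$.

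The argument proceeds by contradiction and mirrors the style of Proposition~\ref{propsmooth}: assume Whitney's condition (b) fails along some pair of strata; then by the Curve Selection Lemma \cite{milnorcurvelemma}, applied after pulling back via the maps $h^I$ and $h^J$ from $(\mathbb{C}^*)^n$, I obtain real analytic curves $(t(s),\xi(s))$ and $(t(s),\eta(s))$ in appropriate semi-algebraic sets, together with curves of tangent planes and secant vectors whose limits realize the failure of (b). Writing Taylor expansions $t(s) = t_0 s^{v_0} + \cdots$ and $\xi_i(s) = a_i s^{w_i} + \cdots$, one extracts a weight vector $w \in \sigma$ and an associated face $\Delta = \Delta_w$ of $\Gamma_+(f_0; z)$, and reduces the Whitney relation to a polynomial identity involving the face function $(f_0)_\Delta$ evaluated at the leading coefficients $a \in (\mathbb{C}^*)^n$. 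The weighted homogeneity of $(L_{f_0})_\Delta$ together with the transferred inner-product identity of Lemma~\ref{pesos} plays the same role here as in Proposition~\ref{propsmooth}; when $\Delta$ is compact, non-degeneracy of $f_0$ directly contradicts the identity via Proposition~\ref{propdeg}.

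The main obstacle, and the reason full admissibility (not just non-degeneracy) is needed, is the case in which the limit point lies on a vanishing stratum $C_J$ with $J \in \mathscr{I}_v(f_0)$. In this case the weight vector produced by the curve satisfies $\langle w, b_i \rangle = 0$ precisely for $i \in J$, and the face $\Delta = \Delta_w$ is an essential non-compact face with $I_\Delta = J$. The uniform estimate $\inf\{E, r_{nc}(f_t)\} \geq \rho$ provided by admissibility ensures that, after shrinking $\mathscr{U}$ if necessary, the point $u \in X(S_\sigma)^{*J}$ whose coordinates are read off from the leading terms of the curve satisfies $|u_{i_1}|^2 + \cdots + |u_{i_m}|^2 < r(f_\Delta)^2$ uniformly in $t$. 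Local tameness of $(f_0)_\Delta$ then forces $(f_0)_\Delta$ to have no critical points in the slice $\mathbb{C}^{*\{1,\dots,r\}}_u$ as a function of the free variables, and this rules out the identity extracted from the Whitney-violating curve. This is the delicate step where the toric combinatorics must be combined with the tameness radius, and it is the true analogue of the key step in \cite[\S 4]{eyraloka1}.

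Once Whitney regularity of $\mathscr{T}$ has been established, Whitney equisingularity of the family $\{V(f_t)\}$ follows in a standard way. The projection $\pi : \mathscr{U} \cap (\mathbb{C} \times X(S_\sigma)) \to D$, $(t,z) \mapsto t$, is a stratified submersion because every stratum of $\mathscr{T}$ is, by construction, a product with the $t$-direction (or contains it in its tangent space), and Thom's first isotopy lemma applied to $\pi$ then yields a local topological trivialization of the stratified pair $(\mathscr{U}, V(f))$ over $D$ that restricts to a trivialization of the family of hypersurfaces compatible with the stratification, which is exactly the Whitney equisingularity of $\{V(f_t)\}$.
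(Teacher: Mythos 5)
Your proposal follows essentially the same route as the paper's proof: the same reduction of stratum pairs (smoothness of the non-vanishing strata plus Whitney regularity of the toric orbit decomposition disposes of all pairs except $(C_I,A_J)$), the same curve/Taylor-expansion analysis producing a weight vector and a face function, non-degeneracy for the compact-face case ($I=\emptyset$), uniform local tameness together with the weighted-homogeneity (Euler) identity and Lemma~\ref{pesos} for the essential non-compact faces, and the standard Thom isotopy conclusion. The only step left implicit in your sketch is that the gradient computation places the limiting secant only in $T_{\gamma(s)}V(F)$, and one must additionally intersect with $T_{\gamma(s)}(\C\times X(\Ssig)^*{}^J)$ — using that the pair of toric orbits is itself Whitney regular — to conclude that the secant lies in $T_{\gamma(s)}A_J$, which is exactly how the paper closes the main case.
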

Prior to the proof of the main theorem, we briefly recall the notion of Whitney equisingularity, along with some relevant observations. A stratification \( \mathcal{S} \) of a subset of $\C^r$ is said to be a \emph{Whitney stratification} if, for each stratum $S$, both its closure $\overline{S}$ and $\overline{S}\setminus S$ are analytic sets. Moreover, for any pair of strata $(S_1,S_2)$ and any $p\in S_1\cap\overline{S}_2$, $S_2$ is Whitney $(b)$-regular over $S_1$ at the point $p$. That is, for any sequence of points $\{p_k\}\subset S_1$, $\{q_k\}\subset S_2$ and $\{a_k\}\subset \C$ satisfying:
\begin{enumerate}
    \item [(i)] $p_k\to p$ and $q_k\to p$;
    \item [(ii)] $T_{q_k}S_2\to T$;
    \item[(iii)] $a_k(p_k-q_k)\to v$;
\end{enumerate}
we have $v\in T$. Here $T_{q_k}S_2$ is the tangent space of $S_2$ at the point $q_k$ and the convergence in (ii) occurs in the respective Grassmannian. Observe that we do not assume the frontier condition to be satisfied. Nevertheless, if \( \mathcal{S} \) is a Whitney stratification of \( \mathcal{U} \cap V(f) \) such that the \( t \)-axis is one of its strata, then the partition \( \mathcal{S}^c \), formed by the connected components of the strata of \( \mathcal{S} \), is likewise a Whitney stratification; furthermore, \( \mathcal{S}^c \) does satisfy the frontier condition (we refer to \cite{gibsontopological} for further details).

\begin{obs}\label{helpteo2}
    If $M$ is a smooth manifold and $N\subset M$ is a closed smooth submanifold of $M$, then $M\setminus N$ is Whitney $(b)$-regular over $N$ at any point.
\end{obs}

\begin{proof}[Proof of Theorem \ref{mainteo}]
    First, note that if $I\subset J$, then $X(\Ssig)^*{}^I\subset \overline{X(\Ssig)^*{}^J}$. Moreover, observe that if $I\subset J$ and $J\in\mathscr{I}_v(f_0)$, then $I\in\mathscr{I}_v(f_0)$ as well. Thus, we need to check the Whitney $(b)$-regularity only for the pairs of strata which satisfy the following conditions:
    \begin{itemize}
        \item $C_I\cap\overline{C_J}\neq\emptyset$, with $I\subset J$ and $I,J\in\mathscr{I}_v(f_0)$;
        \item $C_I\cap\overline{A_J}\neq\emptyset$ or $C_I\cap\overline{B_J}\neq\emptyset$, with $I\subset J$ and $I\in\mathscr{I}_v(f_0)$, $J\in\mathscr{I}_{nv}(f_0)$;
        \item $A_I\cap\overline{A_J}\neq\emptyset$, $A_I\cap\overline{B_J}\neq\emptyset$ or $B_I\cap\overline{B_J}\neq\emptyset$, with $I\subset J$ and $I,J\in\mathscr{I}_{nv}(f_0)$.
    \end{itemize}
    To prove our result, it suffices to show that for any $J\in\mathscr{I}_{nv}(f_0)$ and $I\in\mathscr{I}_{v}(f_0)$, with $I\subset J$,  the stratum $A_J=\mathscr{U}\cap(V(f)\cap X(\Ssig)^*{}^J)$ is Whitney (b)-regular over $C_I=\mathscr{U}\cap(\C\times X(\Ssig)^*{}^I)$.  
    For the other cases, the Whitney $(b)$-regularity follows from Remark \ref{helpteo2}. For instance, consider the pair of strata $(A_I,A_J)$. According to Remark \ref{helpteo1} the set $A^\# =\cup_{K\in\mathscr{I}_{nv}(f_0)} A_K$ is non-singular. Furthermore, $\overline{A_J}\cap A^\#$ contains a smooth closed submanifold $S\subset A^\#$ which includes $A_I$ and $S\cap A_J=\emptyset$. Applying Remark \ref{helpteo2}, with $M=A^\#$ and $N=S$, ensures the Whitney $(b)$-regularity.
    
    Without loss of generality, we can assume $I=\{1,\dots,m\}$ and $J=\{1,\dots,r\}$ with $1\leq m<r$. 
    It suffices to verify that the Whitney (b)-regularity condition holds along arbitrary real analytic curves
    \begin{align*}
        \gamma(s)         &=      (t(s),z(s))            = (t(s),z_1(s),\dots,z_r(s))\\
        \tilde{\gamma}(s) &= (\tilde{t}(s),\tilde{z}(s)) = (\tilde{t}(s),\tilde{z}_1(s),\dots,\tilde{z}_r(s))
    \end{align*}
where $\gamma(s)\in A_J$ and $\Tilde{\gamma}(s)\in C_I$ for $s\neq0$, and $\gamma(0)=\tilde{\gamma}(0)=(\tau,q) \in C_I \cap \overline{A_J}$. 
Since $(\tau, q) \in C_I \cap \overline{A_J}$, it follows that $(\tau,q) = (\tau,q_1,\dots,q_m,0,\dots,0)$, with $q_i\neq0$ for $i=1,\dots,m$.

    Let $l(s):=\vet{\tilde{\gamma}(s)\gamma(s)}$ denote the line segment connecting $\gamma(s)$ to $\tilde{\gamma}(s)$. We aim to show that, as $s\rightarrow0$, the line $l(s)$ belongs to the tangent space $T_{\gamma(s)}A_J$. To this end, we consider the Taylor expansions of the curves $\gamma(s)$ and $\tilde{\gamma}(s)$ around the point $(\tau,q)$. For $\gamma(s)$, we write:
    \begin{align*}
        t(s)           &=  \tau+A_0s+\cdots\\
        z_1(s)         &=  q_1+A_1s+\cdots\\
                   & \;\;\vdots  \\
        z_m(s)         &=  q_m+A_ms+ \cdots\\
        z_{m+1}(s)     &=  A_{m+1}s^{W_{m+1}}+\cdots\\
                   & \;\;\vdots  \\
        z_r(s)         &=  A_rs^{W_r}+\dots
    \end{align*}
where $W_i\in\Z_+^*$, $A_i\in\C^*$, for $m+1\leq i\leq r$. And for $\tilde{\gamma}(s)$, we write:
    \begin{align*}
        \tilde{t}(s)   &= \tau+\tilde{A_0}s+\cdots\\
        \tilde{z}_i(s) &=q_i+\tilde{A_i}s+\cdots
    \end{align*}
    where $i\in\{1,\dots,r\}$, and $\tilde{z}_i(s)=0$ (for all $s$) for $m+1\leq i\leq r$. 
    Thus, writing $l(s)=(l_0(s),l_1(s),\dots,l_r(s))$, we have
    $$
    l_i(s)=
    \begin{cases}
        (A_0-\tilde{A_0})s+\cdots, & \mbox{for}\;i=0\\
        (A_i-\tilde{A}_i)s+\cdots, & \mbox{for}\;1\leq i\leq m\\
        A_is^{W_i}+\cdots,         & \mbox{for}\;m+1\leq i\leq r 
    \end{cases}
    .
    $$
    Without loss of generality, we may suppose    
    \begin{equation}\label{deswi}
    \begin{split}
    W_{m+1}=\cdots=W_{m+m_1}<W_{m+m_1+1}=\cdots=W_{m+m_1+m_2}<\cdots\\
    \cdots<W_{m+m_1+\cdots+m_{k-1}+1}=\cdots=W_{m+m_1+\cdots +m_k}=W_r,\qquad    
    \end{split}
    \end{equation}
    for $m, m_1,\dots,m_k\in\Z_+^*$ and $m + \dsp\sum_{i=1}^{k}m_i=r$. Under these conditions, we will prove the following equality 
    \begin{equation}\label{lim}
        \lim_{s\rightarrow0}\dfrac{\lip l(s),\nabla F(\gamma(s))\rip}{||l(s)||\cdot||\nabla F(\gamma(s))||}=0
    \end{equation}
    where $\lip \cdot, \cdot\rip$ is the Hermitian inner product on $\C\times\C^r$, $f$ is the restriction of a polynomial function $F:\C\times\C^r\longrightarrow\C$ to the toric variety $\C\times X(\Ssig)$
    and 
    $$
    \nabla F(\gamma(s))=\left(\; \overline{\dfrac{\partial F}{\partial t}}(\gamma(s)), \overline{\dfrac{\partial F}{\partial z_1}}(\gamma(s)),\dots,\overline{\dfrac{\partial F}{\partial z_r}}(\gamma(s)) \;\right)
    $$ 
    is the gradient vector of $F$ at $\gamma(s)$ (the bar denotes the complex conjugate). Let us consider
    $$
    \ord l(s)=\dsp\inf_{0\leq i\leq r} \mbox{ord}\;l_i(s)
    $$
    where $\ord l_i(s)$ is the order with respect to $s$ of the $i$-th coordinate of $l(s)$. By (\ref{deswi}), $\ord l(s)\leq W_{m+1}$. Moreover if $\ord l(s)<W_{m+1}$, we have
    \begin{equation}\label{limordmenor}
        \lim_{s\rightarrow0}\dfrac{l(s)}{|s|^{\ord l(s)}}=(\star,\underbrace{\star,\dots,\star}_{m\textrm{-terms}} ,\underbrace{0,\dots,0}_{(r-m)\textrm{-terms}}\!\!\!)        
    \end{equation}
    and if $\ord l(s)=W_{m+1}$, then
    \begin{equation}\label{limordigual}
        \lim_{s\rightarrow0}\dfrac{l(s)}{|s|^{\ord l(s)}}=(\star,\underbrace{\star,\dots,\star}_{m\textrm{-terms}} ,A_{m+1},\dots,A_{m+m_1},\underbrace{0,\dots,0}_{r-(m+m_1)\textrm{-terms}}\!\!\!\!\!\!\!\!)        
    \end{equation}
    where each term ``$\star$'' in (\ref{limordmenor}) and (\ref{limordigual}) represents a complex number, which may be zero or not.

    Let $W:=(0,\dots,0,W_{m+1},\dots,W_r)$, and let $w\in\sigma$ be a weight vector satisfying $\lip w,b_i\rip=0$ for $1\leq i\leq m$. Denote by $\Delta_w$ the face of $\Gamma_{nc}(f_t;z)$ (which is independent of $t$) defined by the locus where the function $\varphi_w:\Gamma_{nc}(f_t;z)\longrightarrow\R$, given by $\varphi_w(x)=\lip x,w\rip$, attains its minimal value. Let $d_w$ denote this minimal value. 
    
    Now observe that, since $X(\Ssig)^I$ is a vanishing  variety, $\Delta_w$ is an essential non-compact face with $I_w=I$. Furthermore, $\Delta_w$ is contained in a hyperplane parallel to the face of the dual cone $\sigduo$ generated by $b_1,\dots,b_m$. Consequently, if $\lambda\in\Delta_w\cap\Ssig$, then 
    $$
    \lambda=p+\sum_{i=1}^{m}X_ib_i,
    $$
    where $p$ is a fixed point in $\Delta_w\cap\Ssig$ and $X_i\in\Z_+$ for $1\leq i\leq m$, or alternatively 
    $$
    \lambda=\sum_{i=i}^{r}\Lambda_ib_i+\sum_{i=1}^{m}X_ib_i,
    $$
    in which $\Lambda_i\in\Z_+$ for $1\leq i\leq r$, and at least one $\Lambda_i\neq0$ for some $m+1\leq i\leq r$. For this reason, the representation of $\lambda$ in $\N^r$ (see page $5$) has the following form
    $$
    \Lambda:=(\Lambda_1+X_1,\dots,\Lambda_m+X_m,\Lambda_{m+1},\dots,\Lambda_r).
    $$
    As a consequence, the inner product $\lip W,\Lambda\rip$ does not depend on the $X_i$'s and is therefore constant. We denote it by $D_W$.
     Let $A=(q_1,\dots,q_m,A_{m+1},\dots,A_r)$. Since $\Gamma_{nc}(f_t;z)$ is independent of $t$,
    \begin{equation}\label{gradinz}
    \dfrac{\partial F}{\partial z_i}(\gamma(s))=\dfrac{\partial (F_\tau)_{\Delta_w}}{\partial z_i}(A)\ s^{D_W-W_i}+\cdots 
    \end{equation}
    for each $1\leq i\leq r$, while
    \begin{equation}\label{limt}
        \lim_{s\rightarrow0}\left(\dfrac{1}{|s|^{D_W-1}}\ \dfrac{\partial F}{\partial t}(\gamma(s))\right)=0,
    \end{equation}
    where $(F_\tau)_{\Delta_w}$ represents the function face $(f_\tau)_{\Delta_w}$ (associated with $f_\tau$ and $\Delta_w$) viewed as a polynomial function on $\C^r$. Now, by (\ref{deswi})
    \begin{equation}\label{desdw}
        \begin{split}
            D_W-W_{m+1}=\cdots=W_{m+m_1}>D_W-W_{m+m_1+1}=\cdots=D_W-W_{m+m_1+m_2}>\cdots\\
    \cdots >D_W-W_{m+m_1+\cdots+m_{k-1}+1}=\cdots=D_W-W_{m+m_1+\cdots+ m_k}=D_W-W_r.\qquad
        \end{split}
    \end{equation}
    Let us denote by 
    $$
    o(s):=\ord \overline{\nabla F}(\gamma(s))=\inf\left\{\ord \dfrac{\partial F}{\partial t}(\gamma(s)), \inf_{1\leq i\leq r}\ord \dfrac{\partial F}{\partial z_i}(\gamma(s))\right\}.
    $$
Assuming $(\tau,q)$ is sufficiently close to $(0,0)\in\C\times X(\Ssig)$, uniform local tameness ensures the existence of $i_0\in\{m+1,\dots,r\}$ such that \begin{equation}\label{diffneq0}
        \dfrac{\partial (F_\tau)_{\Delta_w}}{\partial z_{i_0}}(A)\neq0.
    \end{equation}
Thus, combining equations (\ref{gradinz}) and (\ref{desdw}), the relation (\ref{diffneq0}) implies that
    $$
    o(s)\leq D_W-W_{i_0}\leq D_W-W_{m+1}\leq D_W-1.
    $$
    Consequently, from (\ref{limt}) we have
    $$
    \lim_{s\rightarrow0}\left(\dfrac{1}{|s|^{o(s)}}\ \dfrac{\partial F}{\partial t}(\gamma(s))\right)=0.
    $$
    Furthermore, since $W_i=0$ for $1\leq i\leq m$, if $o(s)<D_W-W_{m+1}$ then
    \begin{equation}\label{limgrad<}
        \lim_{s\rightarrow0}\dfrac{\nabla F(\gamma(s))}{|s|^{o(s)}}=              (0,\!\!\!\!\underbrace{0,\dots,0}_{(m+m_1)\textrm{-terms}}\!\!\!\!,\!\!\!\!\!\!\overbrace{\star,\dots,\star}^{r-(m+m_1)\textrm{-terms}}\!\!\!\!\!\!),
    \end{equation}
    and if $o(s)=D_W-W_{m+1}$, then we have
    \begin{equation}\label{limgrad=}
            \lim_{s\rightarrow0}\dfrac{\nabla F(\gamma(s))}{|s|^{o(s)}}=
            \Biggl(0,\underbrace{0,\dots,0}_{m\textrm{-terms}},\overline{\dfrac{\partial (F_\tau)_{\Delta_w}}{\partial z_{m+1}}}(A),\dots,\overline{\dfrac{\partial (F_\tau)_{\Delta_w}}{\partial z_{m+m_1}}}(A),\!\!\!\!\!\!\!\!\underbrace{\star,\dots,\star}_{r-(m+m_1)\textrm{-terms}}\!\!\!\!\!\!\!\!\Biggr). 
    \end{equation}
    Since $||l(s)||$ and $||\nabla F(\gamma(s))||$ are equivalent to $c_1|s|^{\ord l(s)}$ and $c_2|s|^{o(s)}$ as $s\rightarrow0$, respectively ($c_1,c_2\in\C^*$), it follows from relations (\ref{limordmenor}), (\ref{limordigual}), (\ref{limgrad<}) and (\ref{limgrad=}) that (\ref{lim}) is immediately satisfied except if $o(s)=D_W-W_{m+1}$ and $\ord l(s)=W_{m+1}$. In this case, we must prove the equality
    \begin{equation}\label{eqosordls}
        \sum_{i=m+1}^{m+m_1}A_i\dfrac{\partial (F_\tau)_{\Delta_w}}{\partial z_i}(A)=0.
    \end{equation}
    For this purpose, note that the polynomial function $(F_\tau)_{\Delta_w}$ is weighted homogeneous with weights $W$ and weight degree $D_W$. Then, by Euler identity
    \begin{equation}\label{eulerid}
        \sum_{i=1}^{r}W_iz_i\dfrac{\partial (F_\tau)_{\Delta_w}}{\partial z_i}(z)=D_W\cdot(F_\tau)_{\Delta_w}(z).
    \end{equation}
    Since $f(\gamma(s))=0$ for any $s$, it follows that $(F_\tau)_{\Delta_w}(A)=0$. Furthermore, as $W_i=0$ for $1\leq i\leq m$ and using (\ref{eulerid}) we have
    \begin{equation}\label{euleridins}
        \sum_{i=m+1}^{r}W_iA_i\dfrac{\partial (F_\tau)_{\Delta_w}}{\partial z_i}(A)=0.
    \end{equation}
    Now the equality $o(s)=D_W-W_{m+1}$, combined with (\ref{gradinz}) and (\ref{desdw}), implies that
    \begin{equation}\label{ftauina}
        \dfrac{\partial (F_\tau)_{\Delta_w}}{\partial z_i}(A)=0
    \end{equation}
    for $m+m_1<i\leq r$. Indeed, if there is $i_1>m+m_1$ such that (\ref{ftauina}) does not hold, then by (\ref{gradinz}) and (\ref{desdw}) we have
    $$
    o(s)\leq D_W-W_{i_1}<D_W-W_{m+1}
    $$
    which is a contradiction. Thus, the next equalities follows from (\ref{euleridins}), (\ref{ftauina}) and (\ref{deswi})
    $$
    \sum_{i=m+1}^{m+m_1}W_iA_i\dfrac{\partial (F_\tau)_{\Delta_w}}{\partial z_i}(A)=0\;\Longrightarrow\; W_{m+1}\sum_{i=m+1}^{m+m_1}A_i\dfrac{\partial (F_\tau)_{\Delta_w}}{\partial z_i}(A)=0.
    $$
    As $W_{m+1}\in\Z_+^*$, the equality (\ref{eqosordls}) holds. This proves (\ref{lim}). Thus, we conclude that as $s \rightarrow 0$, $l(s)$ belongs to the tangent space $T_{\gamma(s)}V(F)$.

    On the other hand, consider the orbit $\Tilde{A_J}:=\C\times X(\Ssig)^*{}^J$. Since $I\subset J$
    \[
    C_I\subset\overline{\Tilde{A_J}}\quad\textrm{and}\quad A_J=V(F)\cap\Tilde{A_J}.
    \]
    Consequently, the curve $\gamma(s)\in\Tilde{A_J}$ for all $s\neq0$. Moreover, as $C_I$ and $\Tilde{A_J}$ are orbits of the torus action, the pair of strata $(C_I, \Tilde{A_J})$ in $\C \times X(\Ssig)$ satisfies the Whitney ($b$)-regularity condition. Hence, as $s \to 0$, the point $l(s)$ belongs to the tangent space $T_{\gamma(s)}\Tilde{A_J}$. Finally, as $T_{\gamma(s)}V(F)\cap T_{\gamma(s)}\Tilde{A_J}\subset T_{\gamma(s)}A_J$ we conclude that 
    \[
    \lim_{s\rightarrow0} l(s)\in T_{\gamma(s)}A_J.
    \]

    Now we will deal with the case $I=\emptyset$. In this case, consider $h^J:(\C^*)^n\rightarrow X(\Ssig)^*{}^J$ given by $h^J(\xi)=(\xi^{b_1},\dots,\xi^{b_m},0,\dots,0)$. Since $h^J$ is surjective, we set
    \[
    \mathscr{A}_J:=\{(\eta,\xi)\in D\times(\C^*)^n:(\eta,h^J(\xi))\in\gamma((0,\varepsilon))\}.
    \]
    Furthermore, since the coordinate functions of $h^J$ are monomials and $\gamma(0)=0$, it follows that $0\in\overline{\mathscr{A}_J}$. Let us define the map $H:D\times(\C^*)^n\times[0,\varepsilon)\rightarrow \C\times\C^r$ given by $H(\eta,\xi,s)=(t(s),h^J(\xi))-\gamma(s)$ and observe that
    \[
    H^{-1}(0)=\{(\eta,\xi,s)\in D\times(\C^*)^n\times[0,\varepsilon): (t(s),h^J(\xi))=\gamma(s)\}.
    \]
    Thus, $H^{-1}(0)$ is an analytic set in $D\times(\C^*)^n\times[0,\varepsilon)$. 
    
    Let $\pi_1:\C\times\C^n\times\R\rightarrow\C\times\C^n$ be the projection given by $\pi_1(\eta,\xi,s)=(\eta,\xi)$, and consider the restriction map $\pi_1\rvert_{H^{-1}(0)}:H^{-1}(0)\rightarrow\C\times(\C^*)^n$. For a given $(\eta,\xi)\in D\times(\C^*)^n$, the preimage $\pi_1^{-1}\rvert_{H^{-1}(0)}(\eta,\xi)$ is a finite set. To ensure this finiteness, we can, if necessary, shrink the interval $[0,\varepsilon)$ appearing in the definition of the map $H$ so that the preimage of any point $z\in A_J$ under the curve $\gamma$ is a finite set.
    Consequently $\pi_1(H^{-1}(0))=\mathscr{A}_J$ is an analytic set, as stated in Theorem 2 on page 53 of \cite{grauert2012theory}. Then by the Selection Curve Lemma (analytic case \cite[Lemma 6, p. 16]{okabook}), there exists a real analytic curve $p:[0,\varepsilon')\longrightarrow\C^n$, such that $p(0)=0$ and $p(s)=(\xi_1(s),\dots,\xi_n(s))\in\mathscr{A}_J$ for $s\neq0$. Now, consider the Taylor expansion 
    \[
    \xi_i(s)=a_is^{w_i}+\cdots, \quad 1\leq i\leq n
    \]
    where $w_i>0$ and $a_i\neq0$ for any $1\leq i\leq n$. Using the map $h^J$, the coordinates of the curve $\gamma$ are written in the form
    \begin{align*}
    t(s)   &= A_0s+\cdots \\
    z_i(s) &=a^{b_i}s^{\lip w,b_i\rip}+\cdots, \quad 1\leq i\leq r 
    \end{align*}
    in which $w=(w_1,\dots,w_n) \in ({\mathbb{N}^*})^n$ and $a=(a_1,\dots,a_n)\in(\C^*)^n$. Since compact faces of $L_{f_t}$ are compact faces of $f_t$ (see Remark \ref{obsfc}), let $\Delta_w\subset\Gamma(f_t;z)$ be the compact face associated with $w$ and let $d$ be the minimal value of $\lip w,x\rip$ for $x\in\Gamma_+(L_{f_t};\xi)$. Denoting by $W=(\lip w,b_1\rip,\dots,\lip w,b_r\rip)$ and using the non-degeneracy condition (see Remark \ref{obsng}) together with Lemma \ref{pesos} we conclude that for some $i_0\in\{1,\dots,r\}$
    \begin{equation}
        \dfrac{\partial (F_\tau)_{\Delta_w}}{\partial z_{i_0}}(h^J(a))\neq0.
    \end{equation}
    Therefore, in the case $I=\emptyset$, the non-degeneracy condition is sufficient to conclude the result. 
\end{proof}


\begin{exem}\label{exdeterminantal} Let $q \geq 2$ be a positive integer and consider the toric surface $X(S_{\sigma}) \subset \mathbb{C}^{q+1}$ associated with the semigroup $S_{\sigma} =\lip b_1, \ldots, b_{q+1}\rip = \lip(1,0), (1,1), \ldots, (1,q)\rip$. The toric surface $X(S_{\sigma})$ is also a determinantal surface given by the zero set of the ideal generated by the $2\times2$ minor of the matrix
\begin{equation*}
	\begin{matrix}
	\begin{pmatrix}
			z_1  & \ \ \ \ \ \ & z_2 & \ \ \ \ \ \ & z_3 & \cdots &    z_{q-2} & \ \ \ \ \ \ & z_{q-1} & \ \ \ \ \ \  z_{q} \\
			
			z_2 & \ \ \ \ \ \ & z_3 & \ \ \ \ \ \ & z_4 &  \cdots   &   z_{q-1} & \ \ \ \ \ \ &  z_{q} & \ \ \ \ \ \  z_{q+1} \\
		\end{pmatrix},
	\end{matrix}
\end{equation*}
as we can see, for instance, in \cite[Example 1.1.6.]{cox2024}.

The canonical embedding $h: (\mathbb{C}^*)^2 \to X(S_{\sigma})$ is given by
\[
h(\xi_1, \xi_2) = (\xi_1, \xi_1 \xi_2, \ldots, \xi_1 \xi_2^q).
\]
Consider the family of polynomial functions on $D\times X(\Ssig)$ given by
\[
f(t,z) = z_1^2 +tz_i^d+ z_{q-1} \ \ \ \ \ \Rightarrow \ \ \ \ \ (L_{f_t})(\xi) = \xi_1^2 +t\xi_1^d\xi_2^{di}+\xi_1 \xi_2^{q-1},
\]
where $2\leq i\leq q-2$ and $d\geq2$. Then, 
$$
\text{supp}(f_t) = \{(2,0),(d,di), (1,q-1):2\leq i\leq q-2 \;\text{ and }\; d\geq2\},
$$
and the Newton polyhedron of $f_t$ has just the compact face
\[
\Delta_1 = \overline{AB}, \ \ \text{with} \ \ A = (2,0) \ \ \text{and} \ \ B = (1,q-1).
\]
It is easy to check that $f_t$ is non-degenerate.
Moreover, there are two non-compact faces $\Delta_2 = A + \mathbb{R}_{\geq 0} b_1$, $\Delta_3 = B + \mathbb{R}_{\geq 0} b_q$. The non-compact face $\Delta_2$ is not essential. However, the non-compact face $\Delta_3$ is essential. Indeed, the weight vector $w_3 = (q,-1)$ is orthogonal just to the generator  $b_{q+1} = (1,q)$, then $I_{\Delta_3} = \{q+1\}$ and it is easy to check that $f_t|_{X(S_{\sigma})^{I_{\Delta_3}}} \equiv 0$.







Now consider $u = (0, \ldots, 0, u_{q+1}) \in {X(S_{\sigma})^*}^{\{q+1\}}$, then
\[{\mathbb{C}^{*}_u}^{\{1,\dots,q+1\}} = \{(z_1,z_2,\dots,z_q,u_{q+1}), \ \ z_i \in \mathbb{C}^*\}.
\]
Then, on ${\mathbb{C}^{*}_u}^{\{1,\dots,q+1\}}$ the face function $(f_t)_{{\Delta}_3}(z) = z_{q-1}$ 
has no critical points with respect to the variables $z_1, \dots, z_q$. Thus, the family of polynomial functions $\{f_t\}$ is admissible. Therefore, by Theorem \ref{mainteo} the corresponding family of hypersurfaces $\{V(f_t)\}$ is Whitney equisingular.
\end{exem}

\begin{center}
	{ \bf Acknowledgments}
\end{center}

We would like to thank Daniel Duarte for all the helpful discussions he provided throughout the development of this work, as well as for his careful reading and valuable suggestions for improving the text.



\bibliography{refDD}

\end{document}